\newcommand{\cO}{\mathcal{O}}
\newcommand{\h}{\mathfrak{h}}
\newcommand{\p}{\mathfrak{p}}
\newcommand{\ds}{\displaystyle}
\newcommand{\ra}{\rightarrow}
\newcommand{\xra}[1]{\xrightarrow{#1}}
\newcommand{\hra}{\hookrightarrow}
\newcommand{\eps}{\varepsilon}
\newcommand{\vphi}{\varphi}
\newcommand{\Zhat}{\widehat{\Z}}
\newcommand{\comment}[1]{}
\newcommand{\Q}{\mathbb{Q}}
\newcommand{\R}{\mathbb{R}}
\newcommand{\cD}{\mathcal{D}}
\newcommand{\cI}{\mathcal{I}}
\newcommand{\C}{\mathbb{C}}
\newcommand{\Qbar}{\overline{\Q}}
\newcommand{\T}{\mathbb{T}}
\newcommand{\Z}{\mathbb{Z}}
\newcommand{\F}{\mathbb{F}}
\newcommand{\Fbar}{\overline{\F}}
\newcommand{\isom}{\cong}
\newcommand{\n}{\mathfrak n}
\newcommand{\bc}{\mathbf{c}}
\newcommand{\Rhat}{\widehat{R}}
\newcommand{\Af}{\mathbb{A}_f}
\DeclareMathOperator{\Div}{Div}
\DeclareMathOperator{\SSS}{SS}
\DeclareMathOperator{\spn}{spn}
\DeclareMathOperator{\gen}{gen}
\DeclareMathOperator{\Tr}{Tr}
\DeclareMathOperator{\Pic}{Pic}
\DeclareMathOperator{\End}{End}
\DeclareMathOperator{\red}{red}
\DeclareMathOperator{\nr}{nr}
\DeclareMathOperator{\GL}{GL}
\DeclareMathOperator{\Gal}{Gal}
\DeclareMathOperator{\SL}{SL}
\DeclareMathOperator{\can}{can}
\DeclareMathOperator{\loc}{loc}
\DeclareMathOperator{\new}{new}
\DeclareMathOperator{\Eis}{Eis}
\DeclareMathOperator{\cusp}{cusp}
\DeclareMathOperator{\CM}{CM}
\DeclareMathOperator{\Res}{Res}
\DeclareMathOperator{\Sh}{Sh}
\DeclareMathOperator{\Stab}{Stab}
\DeclareMathOperator{\lcm}{lcm}
\DeclareMathOperator{\old}{old}
\DeclareMathOperator{\Hom}{Hom}
\theoremstyle{plain} 
\newtheorem{thm}{Theorem}[section] 
\newtheorem{prop}[thm]{Proposition}
\newtheorem{cor}[thm]{Corollary}
\newtheorem{lem}[thm]{Lemma}
\newtheorem{question}[thm]{Question}
\theoremstyle{definition} 
\theoremstyle{remark} 
\newtheorem{rem}{Remark}
\begin{abstract}
We prove new equidistribution results for Galois orbits of Heegner points 
with respect to reduction maps at inert primes. The arguments are based on 
two different techniques: primitive representations of integers by quadratic 
forms and distribution relations for Heegner points. Our results generalize 
one of the equidistribution theorems established by Cornut and Vatsal in the 
sense that we allow both the fundamental discriminant and the conductor to grow. 
Moreover, for fixed fundamental discriminant and variable conductor, we deduce 
an effective surjectivity theorem for the reduction map  from Heegner points 
to supersingular points at a fixed inert prime. Our results are applicable to 
the setting considered by Kolyvagin in the construction of the Heegner points 
Euler system.    
\end{abstract}
\begin{document}

\title[Equidistribution of Heegner Points]
{Equidistribution of Heegner Points and Ternary Quadratic Forms}
\author{Dimitar Jetchev}
\email{jetchev@gmail.com}
\address{IHES Le Bois-Marie\\35, route de Chartres\\ Bures-sur-Yvette\\France}

\author{Ben Kane}
\email{bkane@science.ru.nl}
\address{Department of Mathematics\\Radboud Universiteit\\Toernooiveld 1\\6525 Nijmegen\\Netherlands}

%
\classification{}
\keywords{}
\thanks{}

\maketitle

\section{Introduction}\label{sec:intro}
Uniform distribution of Galois orbits of Heegner points with respect to reduction maps 
was the key step in the argument of Cornut and Vatsal for the proof of 
Mazur's conjecture on the non-triviality of Heegner points over the $p$-adic 
anticyclotomic tower (see~\cite{mazur:modcurves} for the statement; \cite{cornut:inventiones},~\cite{vatsal:uniform},~\cite{vatsal:special} and~\cite{cornut-vatsal} for the proofs). Both Cornut and Vatsal used ergodic theory techniques based on Ratner's 
theorem for unipotent flows on $p$-adic Lie groups (see~\cite{ratner:raghunatan}) in order to prove the results for simultaneous reduction maps (i.e., maps that reduce simultaneously $n$-tuples of Galois conjugates of Heegner points modulo a fixed inert prime $\ell$). Due to the $p$-adic nature of the ergodic techniques, one needs to fix the fundamental discriminant and vary the conductor $p$-adically.    

This paper proves a more general equidistribution result for single reduction maps, 
in the sense that both the fundamental discriminant and the conductor are allowed to vary 
and the only assumption on the conductor is that it is prime to the level of the modular curve. We avoid the ergodic theory by using arguments based on equidistribution 
of primitive representations of integers by quadratic forms in genera, as well as 
distribution relations of Heegner points and Hecke eigenvalue bounds. Along the way, 
we obtain a generalization of an equidistribution theorem for Gross points on definite 
Shimura curves established by Michel. Finally, we prove effective surjectivity results 
for sufficiently large Galois orbits with respect to reduction maps in the case 
when the fundamental discriminant is fixed and the conductor varies. 

\subsection{Notation and hypothesis}

Let $N \geq 1$ be an integer and let $X_0(N)_{/\Q}$ be the modular curve associated 
to the congruence subgroup $\Gamma_0(N)$ of $\SL_2(\Z)$. Let $\ell$ be a 
prime such that $(\ell, N) = 1$ and let $\cD_N$ be the set of all fundamental 
discriminants $D < 0$ such that every prime factor of $N$ is split in 
$K_D := \Q(\sqrt{D})$ and such that $\ell$ is inert in $K_D$. Let $\Omega_N$ be 
the set of all pairs $(D, c)$, where $D \in \cD_N$ and $(c, N) = 1$.  

Fix an embedding 
$\iota : \Qbar \hra \Qbar_\ell$ (i.e., a prime in $\Qbar$ lying 
above $\ell$). Let $(D, c) \in \Omega_N$ and let $\cO_{D, c}$ be the order of 
conductor $c$ in the quadratic imaginary field $K_D = \Q(\sqrt{D})$. Fix an ideal 
$\n_D \subset \cO_{D, 1}$ for which $\cO_{D, 1} / \n_D \isom \Z / N\Z$. 
For $(c, N) = 1$, $\n_{D, c} := \n_D \cap \cO_{D, c}$ is an invertible ideal of $\cO_{D, c}$. Consider the point $x_c = [\C/ \cO_{D, c} \ra \C / \n_{D, c}^{-1}] \in X_0(N)(\Qbar)$. 
By the theory of complex multiplication, it is defined over the ring class field $K_D[c]$ of conductor $c$ for $K_D$. We refer to that point as the higher Heegner point of conductor $c$. Let $\Gamma_{D, c} := \{\sigma x_c : \sigma \in \Gal(K_D[c]/K_D)\}$ be the corresponding Galois orbit. The fixed embedding $\iota$ gives us a prime in $K_D[c]$ 
above $\ell$. The choice of the embedding defines a reduction map 
$$
\red_\ell : X_0(N)(K_D[c]) \hookrightarrow X_0(N)(K_D[c]_\ell) = X_0(N)(\cO_{K_D[c]_\ell}) 
\xrightarrow{\mod \ell} X_0(N)(\Fbar_\ell) 
$$ 
where $\cO_{K_D[c]_\ell}$ is the ring of integers of the completion $K_D[c]_\ell$ (the 
equality in the middle follows from the valuative criterion of properness). Moreover, 
since $\ell$ is inert in $K_D$, then CM points for $K_D$ reduce to supersingular points modulo $\ell$ (see~\cite{deuring}). Let $X_0(N)^{\SSS}_{/\Fbar_\ell}$ be the set of 
supersingular points on $X_0(N)$ modulo $\ell$. It is well-known that these points are 
defined over $\F_{\ell^2}$. We will prove an equidistribution theorem according to which 
as $d_{c} := -Dc^2 \ra \infty$, every $s \in X_0(N)^{\SSS}_{/\F_{\ell^2}}$ will have the same number of preimages in $\Gamma_{D, c}$ under $\red_{\ell}$. We will state our result in terms of probability 
measures on the finite set $X_0(N)^{\SSS}_{/\F_{\ell^2}}$.  

\subsection{A canonical measure on $X_0(N)^{\SSS}_{/\Fbar_\ell}$}\label{subsec:measure}
Let $s \in X_0(N)^{\SSS}_{/\F_{\ell^2}}$ be a supersingular point. Then $s$ is represented by a pair $(\tilde{E}, \tilde{C})$ of a supersingular elliptic curve $\tilde{E}_{/\Fbar_\ell}$ and 
a cyclic subgroup $\tilde{C}$ of $\tilde{E}$ of order $N$. Following~\cite[\S 3]{ribet:modreps}, we refer to the pair $\mathbb{E} = (\tilde{E}, \tilde{C})$ as an 
\emph{enhanced elliptic curve} over $\F_{\ell^2}$. Homomorphisms of enhanced elliptic curves are defined in the obvious way. In particular, one could talk about endomorphisms and automorphisms of enhanced elliptic curves. 

Let $\mathbb{E} = (\tilde{E}, \tilde{C})$ be an enhanced elliptic curve representing the point $s$. The endomorphism algebra $\End(\tilde{E}) \otimes \Q$ is isomorphic to the unique quaternion algebra $B_{\ell, \infty}$ ramified precisely at $\ell$ and $\infty$. 
The endomorphism ring $\End(\tilde{E})$ is a maximal order 
in $B_{\ell, \infty}$ and the ring $\End(\mathbb{E})$ is an Eichler order of level $N$. 
Indeed, if $\lambda : \tilde{E} \ra \tilde{E} / \tilde{C}$ is the quotient map, then $\End(\tilde{E} / \tilde{C})$ can be viewed as a subring of $B_{\ell, \infty}$ via the 
map $\sigma \in \End(\tilde{E} / \tilde{C}) \mapsto \lambda^{-1} \sigma \lambda$. Then 
$\End(\mathbb{E})$ is the intersection of the two maximal orders $\End(\tilde{E})$ and 
$\End(\tilde{E} / \tilde{C})$. Let $R_s$ denote this Eichler order and let 
$w_s := \# R_s^\times$. We can use $w_s$ to define a canonical measure $\mu_{\can}$ 
on $X_0(N)^{\SSS}_{/\F_{\ell^2}}$ by 
$$
\mu_{\can}(s) := \frac{1/w_s}{ \sum_{s^\prime \in X_0(N)^{\SSS}_{/\F_{\ell^2}}} 1/w_{s'}}. 
$$   

\subsection{Main results}\label{subsec:main}

\noindent \emph{1. Equidistribution of Heegner points.} 
We can now state the main result of the paper. For $(D, c) \in \Omega_N$, define a measure 
$\mu_{D, c}$ on the finite set $X_0(N)^{\SSS}_{/\F_{\ell^2}}$ by  
$$
\mu_{D, c}(s) := \frac{\# \{ x \in \Gamma_{D, c} : \red_\ell(x) = s \}}{\# \Gamma_{D, c}},  
\quad s \in X_0(N)^{\SSS}_{/\F_{\ell^2}}. 
$$   

\begin{thm}\label{thm:main1}
The weak-* limit $\displaystyle \lim_{\substack{-Dc^2 \rightarrow \infty, \\ (D, c) \in \Omega_N}} \mu_{D, c}$ exists and equals $\mu_{\can}$.  
\end{thm}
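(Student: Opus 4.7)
The plan is to translate the question into one about equidistribution of primitive representations of integers by ternary quadratic forms, and then separately handle the variation in the conductor via the classical distribution relations for Heegner points under Hecke correspondences.

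First I would use the Deuring/Gross dictionary recalled in \S\ref{subsec:measure}. Fix an enhanced supersingular elliptic curve $\mathbb{E}_s = (\tilde E, \tilde C)$ with Eichler order $R_s \subset B_{\ell,\infty}$. The Heegner points on $X_0(N)$ of discriminant $-Dc^2$ reducing to $s$ are in bijection (weighted by $1/w_s$) with the optimal embeddings $\cO_{D,c} \hookrightarrow R_s$ modulo $R_s^\times$-conjugacy. The set of trace-zero elements in $R_s$ carries a ternary quadratic form (the reduced norm) whose genus is determined by $s$; the number of such embeddings is then a count of primitive representations of $-Dc^2$ (with appropriate local conditions at the primes dividing $N$) by this ternary form. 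So $\mu_{D,c}(s)$ is essentially, up to normalization by $\#\Gam_{D,c}$ and by $w_s$, the number of primitive representations of $-Dc^2$ by the specific ternary form attached to $s$.

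Next, I would reduce to the case where $(D,c)$ is a fundamental discriminant, i.e.\ $c=1$. For this the main tool is the distribution relations for Heegner points: if $q \nmid Nc$ is a prime, then the image of $\Gamma_{D,cq}$ under $\red_\ell$ is related to the image of $\Gamma_{D,c}$ under an appropriate Hecke correspondence $T_q$ (plus correction terms coming from the Euler factor at $q$). Iterating, I can express $\mu_{D,c}$ as a combination of pushforwards of $\mu_{D,1}$ under $T_q$'s. Since each $T_q$ is self-adjoint, controlling the testing of $\mu_{D,c}$ against a function $\varphi$ orthogonal to $\mu_{\can}$ reduces to controlling eigenvalues $\lambda_q$ on the Eisenstein-free part of the space of functions on the finite set $X_0(N)^{\SSS}_{/\F_{\ell^2}}$; by the Jacquet–Langlands correspondence the Deligne/Ramanujan bound $|\lambda_q| \le 2\sqrt{q}$ applies, giving genuine savings as $c$ grows.

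Having reduced to $c=1$, I would appeal to the (by now classical) equidistribution of primitive representations of a large integer by a fixed ternary quadratic form within its genus, due to Duke and Duke–Schulze-Pillot (and in the version uniform over the genus needed here, to Michel). Summing these statements over the finitely many genera of ternary forms produced by the supersingular locus, and taking into account the local conditions at primes dividing $N$ (so that the representations are compatible with the level structure $\tilde C$), yields exactly the claim that $\mu_{D,1} \to \mu_{\can}$ weakly as $-D \to \infty$ through $\cD_N$.

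The main obstacle, and the reason the paper needs the two techniques jointly, is to make these two reductions cohabit: the distribution relations gain in $c$ only via Hecke eigenvalue bounds, while the Duke/Michel input gains in $D$ only for primitive representations. Controlling the error uniformly as both $c$ and $D$ grow requires a careful accounting of (i) which representations are primitive and which come from imprimitive contributions at primes dividing $c$, and (ii) the spectral gap for the Hecke operators on the supersingular graph when both variables move. Handling this interaction uniformly over $(D,c) \in \Omega_N$ is what I expect to be the heart of the argument.
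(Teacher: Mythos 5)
Your plan is correct in outline, but it is a genuinely different route from the paper's main argument, so it is worth comparing the two. The paper's proof of Theorem~\ref{thm:main1} stays inside half-integral weight theta series after the same first step (Heegner points reducing to $s$ $\leftrightarrow$ optimal embeddings $\cO_{D,c}\hra R_s$ $\leftrightarrow$ primitive representations of $-Dc^2$ by $Q_s$): it compares $r^*(Q_s,d_c)$ with $r^*(\spn(Q_s),d_c)$ via Duke's bound against Siegel's lower bound for $\#\Gamma_{D,c}$, and the crux is Proposition~\ref{prop:gen-spn}, $r^*(\gen(Q_s),d_c)=r^*(\spn(Q_s),d_c)$ for $(c,N\ell)=1$, proved from the structure of one-dimensional theta series together with Vatsal's equidistribution theorem at conductors $p^k$ quoted as a black box; since the genus count is independent of $s$, summing over $s'$ with weights $1/w_{s'}$ pins the limit to $w_s\mu_{\can}(s)$ with rate $O(d_c^{-1/28+\eps})$. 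You instead bypass spinor genera entirely: your conductor step (distribution relations plus Jacquet--Langlands and Deligne) is essentially the method the paper relegates to its Section on distribution relations, upgraded to be uniform in $D$, and it plays the role that Vatsal's theorem plays in the paper; the $D$-aspect at bounded $c$ is then exactly the known $c=1$ statement of Duke/Michel/Elkies--Ono--Yang. The interaction you flag as the heart is actually the easy part: the Hecke contraction on the cuspidal part of functions on the fixed finite set $X_0(N)^{\SSS}_{/\F_{\ell^2}}$ is $O_\eps(c^{-1/2+\eps})$ uniformly in $D$, because the space is fixed and finite-dimensional, every $\mu_{D,1}$ has mass one, and the Eisenstein line is spanned by the constants whose $\mu_{D,1}$-integral is $1$; the Duke input is needed only when $c$ stays bounded, where $-Dc^2\to\infty$ forces $|D|\to\infty$, so taking the minimum of the two bounds closes the argument. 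What genuinely needs to be filled in on your route is what you pass over quickly: the full distribution relations at split and ramified primes and at prime powers (the paper records only the inert squarefree case), and the normalization constants ($2^{\nu(N)}$ orbits, the $w_s/u_{D,c}$ weighting). As for what each approach buys: the paper's proof yields one quantitative (Siegel-ineffective) error term in $d_c=-Dc^2$ and the genus-equals-spinor-genus statement of independent interest, while yours is softer and avoids the theta-series bookkeeping but, as stated, gives only a qualitative limit unless you also import the quantitative form of Duke's theorem in the bounded-$c$ regime.
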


\begin{rem}
To say that the weak-* limit of a sequence of measures $\{\mu_n\}$ on a finite set $X$ exists and converges to a measure $\mu$ on $X$ means that for each function $f : X \ra \R$, 
the limit $\ds \lim_{n \to \infty} \int_X f d\mu_n $ exists and equals $\ds \int_X f d\mu$. 
\end{rem}

\noindent \emph{2. Equidistribution of Gross points on the definite quaternion 
algebra $B_{\ell, \infty}$.} The curve $X_0(N)_{/\Q}$ can be viewed as a Shimura curve 
for the quaternion algebra $M_2(\Q)$, and thus, Heegner points can be regarded as 
CM points on the indefinite quaternion algebra $M_2(\Q)$. In the case of a totally definite 
quaternion algebra (e.g., $B_{\ell, \infty}$), the analogues of Heegner points (also known as Gross points) were studied in detail by Gross \cite{gross:heights}.

Let $G'$ be the algebraic group associated to $B_{\ell, \infty}^\times$ and let $I_1, \dots, I_h$ be left ideals representing the left ideal classes (corresponding to the double quotient $G'(\Q) \backslash G'(\mathbb{A}_f) / \Rhat^\times$). Let $R_1, \dots, R_h$ be the 
associated Eichler orders. Given a conductor $c$, the points of conductor $c$ are simply pairs $(f : \cO_c \hra {R_i}_{/R_i^\times}, R_i)$ of one of these orders $R_i$ and an $R_i^\times$-conjugacy classes of optimal embeddings $f : \cO_c \ra R_i$. 
Recall that $f : \cO_c \ra R$ is optimal if $f(K) \cap R = \cO_c$ 
(we have extended $f$ to an embedding $f : K \ra B_{\ell, \infty}$). 
Let $\tilde{\mu}_{D,c}([I_i])$ be the number of Gross points $(f, R_i)$ of conductor 
$c$ divided by the total number of Gross points of conductor 
$c$. Then $\tilde{mu}_{D, c}$ is a probability measure on $G'(\Q) \backslash G'(\mathbb{A}_f) / \Rhat^\times$. There is a canonical measure on $G'(\Q) \backslash G'(\mathbb{A}_f) / \Rhat^\times$ defined as 
$$
\tilde{\mu}_{\can}([I_k]) := \frac{1/w_k}{\sum_{i = 1}^h 1/w_i}.
$$ 
\begin{thm}\label{thm:main2}
The weak-* limit $\ds \lim_{\substack{-Dc^2 \rightarrow \infty, \\ (D, c) \in \Omega_N}} \tilde{\mu}_{D,c}$ exists and equals $\tilde{\mu}_{\can}$. 
\end{thm}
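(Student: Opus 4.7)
The plan is to derive Theorem~\ref{thm:main2} from Theorem~\ref{thm:main1} by means of the Deuring--Eichler correspondence, which identifies the data of reduction modulo $\ell$ of Heegner points with the data of Gross points on $B_{\ell,\infty}$. Concretely, I would first fix a base enhanced supersingular elliptic curve $\mathbb{E}_0/\F_{\ell^2}$ with $\End(\mathbb{E}_0) = R_1$ and recall the bijection
$$
\Theta : X_0(N)^{\SSS}_{/\F_{\ell^2}} \;\riso\; G'(\Q)\backslash G'(\Af)/\Rhat^\times, \qquad \mathbb{E} \longmapsto [\Hom(\mathbb{E}_0,\mathbb{E})],
$$
under which $\End(\mathbb{E})$ is the right order of the corresponding ideal. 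In particular, the automorphism weight $w_s$ of \S\ref{subsec:measure} matches $w_{i(s)} = \#R_{i(s)}^\times$, so $\Theta_\ast \mu_{\can} = \tilde{\mu}_{\can}$.

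Next, I would translate the reduction of a Heegner point into Gross-point data. For $\sigma \in \Gal(K_D[c]/K_D)$ with associated ideal class $[\a] \in \Pic(\cO_{D,c})$, the conjugate $\sigma x_c$ is the moduli pair $(\C/\a^{-1}\cO_{D,c} \to \C/\a^{-1}\n_{D,c}^{-1})$. By Deuring's theorem, its reduction $\mathbb{E}_\sigma$ is a supersingular enhanced elliptic curve, and the CM action lifts to a canonical optimal embedding $f_\sigma : \cO_{D,c} \hookrightarrow \End(\mathbb{E}_\sigma) = R_{s(\sigma)}$ (optimality being equivalent to the condition $(c,N)=1$ built into $\Omega_N$). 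The rule $\sigma \mapsto (f_\sigma, R_{s(\sigma)})$ then produces a map from $\Gamma_{D,c}$ to Gross points of conductor $c$.

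I would then show that this map, composed with $\Theta$ on the target, is surjective onto conductor-$c$ Gross points and has fibers of a size independent of the target class $[I_i]$. The essential input is the classical fact that $\Pic(\cO_{D,c})$ acts simply transitively on $R_i^\times$-conjugacy classes of optimal embeddings $\cO_{D,c} \hookrightarrow R_i$ at every ideal class, so that the common proportionality constant (involving $|\cO_{D,c}^\times|$ and the stabilizer of the Galois action on $x_c$) cancels when one forms the ratios defining $\mu_{D,c}(s)$ and $\tilde{\mu}_{D,c}([I_{i(s)}])$. This yields $\Theta_\ast \mu_{D,c} = \tilde{\mu}_{D,c}$, and weak-$\ast$ convergence on the $X_0(N)$-side transports directly to the quaternionic side.

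The main obstacle is the bookkeeping in this last step: one must verify that the weighted counts on both sides really are proportional with the same constant, by tracking the simultaneous contributions of $\cO_{D,c}^\times$, $R_i^\times$, and $\Aut(\mathbb{E}_\sigma)$ so that the unit factors cancel uniformly in $i$ (equivalently in $s$). Once this compatibility of the Deuring--Eichler dictionary with the measures $\mu_{D,c}$ and $\tilde{\mu}_{D,c}$ is established, Theorem~\ref{thm:main2} is a formal consequence of Theorem~\ref{thm:main1}.
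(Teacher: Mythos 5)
Your overall route --- identifying $X_0(N)^{\SSS}_{/\F_{\ell^2}}$ with the ideal classes via $\mathbb{E}\mapsto\Hom(\mathbb{E}_0,\mathbb{E})$, checking $\Theta_*\mu_{\can}=\tilde{\mu}_{\can}$, and proving $\Theta_*\mu_{D,c}=\tilde{\mu}_{D,c}$ so that Theorem~\ref{thm:main2} transports from Theorem~\ref{thm:main1} --- is in substance what the paper does: the paper treats the two theorems as equivalent through exactly this dictionary (the adelic description of supersingular points, the Cornut--Jetchev corollary $h(\cO_{D,c},R_s)=2^{\nu(N)}\#\{x\in\Gamma_{D,c}:\red_\ell(x)=s\}$, and Proposition~\ref{prop:corresp}) and then proves the common statement \eqref{eqn:quadmain} once. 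So the strategy is fine; the deduction of Theorem~\ref{thm:main2} from Theorem~\ref{thm:main1} is legitimate.

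However, the step you label as the essential input is false as stated: $\Pic(\cO_{D,c})$ does \emph{not} act simply transitively on the $R_i^\times$-conjugacy classes of optimal embeddings $\cO_{D,c}\hra R_i$ for each fixed $i$. The number $h(\cO_{D,c},R_i)$ of such classes genuinely depends on $i$ (it can even vanish for some classes when $-Dc^2$ is small), and the assertion that it is asymptotically proportional to $1/w_i$ is precisely the content of Theorem~\ref{thm:main2}; if simple transitivity held at every ideal class, equidistribution would be an exact and trivial identity. What is true, and what your argument actually needs, is: (i) the class group acts simply transitively on the set of all Gross points of conductor $c$ with a fixed orientation (equivalently on each Galois orbit of Heegner points), which fixes the total counts on both sides; and (ii) the fiber count for $\red_\ell$, namely that the conductor-$c$ CM points reducing to a fixed $s$ correspond bijectively, up to the uniform orientation factor $2^{\nu(N)}$, to the optimal-embedding classes into $R_s$ --- this is the Deuring/Gross--Zagier statement you invoke in your second paragraph and is supplied in the paper by Cornut--Jetchev. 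With (i) and (ii) the ratios defining $\mu_{D,c}(s)$ and $\tilde{\mu}_{D,c}([I_{i(s)}])$ agree identically and your transport of the weak-$*$ limit goes through. A smaller point: optimality of the reduced embedding is not ``equivalent to $(c,N)=1$''; that hypothesis ensures the CM points are Heegner points (equal conductors under the two degeneracy maps), while optimality at $\ell$ comes from $\ell$ being inert in $K_D$ and prime to $c$.
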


\begin{rem}
A similar statement (for trivial conductor $c=1$) has already been established by Michel \cite[Thm.3]{michel:subconvexity} using subconvexity bounds for $L$-functions and independently by Elkies, Ono, and Yang \cite[Theorem 1.2]{ElkiesOnoYang}.
\end{rem}

\begin{rem}
Both Theorem~\ref{thm:main1} and Theorem~\ref{thm:main2} hold in greater generality 
for CM points on indefinite and totally definite quaternion algebras, respectively, with 
respect to more general reduction maps at several primes. The more general statements 
will be the subject of a forthcoming paper. 
\end{rem}

\begin{rem}
We will see in Section~\ref{subsec:adelic_supersingular} that the canonical measures $\mu_{\can}$ and $\tilde{\mu}_{\can}$ indeed coincide. 
\end{rem}

\vspace{0.1in}

\noindent \emph{3. Congruences for Hilbert class polynomials under the $U$-operator.} Recall that for a function with Fourier expansion $\ds f(z)=\sum_{n\geq 0} a(n) q^n$ the operator $U(\ell)$ is defined by  $\ds f(z)|U(\ell) := \sum_{n\geq 0} a(\ell n) q^n$.  Elkies, Ono and Yang were interested in the equidistribution of Heegner points with respect to reduction maps which they used to study a certain congruence for the Hilbert class polynomial under the $U$-operator.  In particular, combining the case $N=1$ of Theorem \ref{thm:main1} with \cite[Thm~2.3 (1)]{ElkiesOnoYang} gives the following immediate corollary (the case $c=1$ is \cite[Thm.~1.1]{ElkiesOnoYang}):  
\begin{cor}
Let $H_{D,c}\in \Z[x]$ be the polynomial whose roots are precisely the $j$-invariants of those elliptic curves with CM by $\cO_{D,c}$.  Let $\ell$ be a prime which is nonsplit in $\cO_{D,c}$.  Then for $d_c = -Dc^2$ sufficiently large (depending on $\ell$) there exists a polynomial $P_{D,c,\ell}\in \Z[x]$ such that 
$$
H_{D,c}(j(z)) | U(\ell) \equiv P_{D,c,\ell}(j(z)) \pmod{\ell}.
$$
\end{cor}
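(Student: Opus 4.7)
The plan is to combine the $N=1$ case of Theorem~\ref{thm:main1} with the Elkies--Ono--Yang criterion~\cite[Thm~2.3~(1)]{ElkiesOnoYang}. On the $j$-line $X_0(1) = \P^1$, the roots of $H_{D,c}$ are precisely the $j$-invariants of the CM points in the Galois orbit $\Gamma_{D,c}$. Because $\ell$ is nonsplit in $\cO_{D,c}$, Deuring's theorem guarantees that every such CM point reduces modulo $\ell$ to a supersingular $j$-invariant, so the zero divisor of $H_{D,c}(j)$ on the special fibre at $\ell$ is already supported on $X_0(1)^{\SSS}_{/\Fbar_\ell}$.

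The criterion~\cite[Thm~2.3~(1)]{ElkiesOnoYang} asserts that for a modular function $f$ whose reduction modulo $\ell$ has its zero divisor supported on supersingular points, $f|U(\ell) \pmod{\ell}$ agrees with a polynomial in $j$ once this supersingular divisor meets every supersingular $j$-invariant. Thus the corollary reduces to showing that, for $d_c = -Dc^2$ sufficiently large, the reduction map $\red_\ell : \Gamma_{D,c} \to X_0(1)^{\SSS}_{/\F_{\ell^2}}$ is surjective.

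This surjectivity is an immediate consequence of Theorem~\ref{thm:main1} applied with $N=1$. Since the set $X_0(1)^{\SSS}_{/\F_{\ell^2}}$ is finite, weak-$*$ convergence of $\mu_{D,c}$ to $\mu_{\can}$ is equivalent to pointwise convergence on this set. The canonical measure satisfies $\mu_{\can}(s) = (1/w_s)/\sum_{s'}(1/w_{s'}) > 0$ for every supersingular $s$, so there exists a threshold $B = B(\ell)$ such that $\mu_{D,c}(s) > 0$ for all $s$ whenever $d_c \geq B$. Equivalently, $\Gamma_{D,c} \cap \red_\ell^{-1}(s) \neq \emptyset$ for every supersingular $s$, which verifies the Elkies--Ono--Yang criterion and produces the required polynomial $P_{D,c,\ell}$.

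The main obstacle is cosmetic rather than substantive: one must match the precise form of the hypothesis in~\cite[Thm~2.3~(1)]{ElkiesOnoYang}---including its treatment of the automorphism weights $w_s$ and the multiplicities arising when distinct CM $j$-invariants coincide modulo $\ell$---to the bare surjectivity afforded by equidistribution. Note also that Theorem~\ref{thm:main1} as stated covers only the inert case; if ``nonsplit'' is interpreted to include $\ell$ ramified in $\cO_{D,c}$, that case must be handled separately (for instance via Deuring-lifting arguments), though it concerns only finitely many $D$ for a fixed $\ell$ and poses no serious difficulty.
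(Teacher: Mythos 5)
Your proposal follows exactly the paper's own route: the paper obtains this corollary as an immediate consequence of the $N=1$ case of Theorem~\ref{thm:main1} combined with the Elkies--Ono--Yang criterion, and your argument merely spells out the ``immediate'' step (weak-$*$ convergence on a finite set plus positivity of $\mu_{\can}$ gives surjectivity of $\red_\ell$ for $d_c$ large). One small caveat: your closing aside that the ramified case concerns only finitely many $D$ for fixed $\ell$ is inaccurate---there are infinitely many fundamental discriminants divisible by $\ell$---but this does not affect the main argument, which matches the paper's.
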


\vspace{0.1in}

\noindent \emph{4. Effective surjectivity of $\red_\ell$.}
One consequence of both Theorem~\ref{thm:main1} and Theorem~\ref{thm:main2} is 
the fact that for sufficiently large discriminant $d_c = -Dc^2$, the reduction 
map from CM points of conductor $c$ to supersingular points is surjective. It is 
natural to ask whether this theorem can be made effective. The ineffectiveness of 
one of the ingredients used in our argument, Siegel's lower bound on the class number, prevents us from establishing an effective result when both $D$ and $c$ vary. 
Yet, fixing the fundamental discriminant $D$ and varying the conductor $c$, one 
can establish effective surjectivity theorems (see Theorem~\ref{thm:eff_surjectivity} and Lemma~\ref{lem:eff_onedim}).  



\section{Heegner points and optimal embeddings}\label{sec:deuring}
Let $s \in X_0(N)^{\SSS}_{/\F_{\ell^2}}$ be a supersingular point modulo $\ell$.
In this section we will establish a one-to-one correspondence between 
\[
\begin{array}{rcl}
\left \{
\begin{array}{c}
\textrm{Heegner points } x \textrm{ on } X_0(N) \textrm{ of conductor }c\\
\textrm{ reducing to } s \in X_0(N)^{\SSS}_{/\Fbar_\ell}
\end{array}
\right \} 
& 
\Longleftrightarrow
&
\left \{ 
\begin{array}{c}
R_s^\times-\textrm{conjugacy classes of} \\
\textrm{optimal embeddings } \cO_{D, c} \hra R_s
\end{array}
\right \}
\end{array}
\]
For $c = 1$, the above correspondence is known as Deuring lifting theorem (see \cite{deuring}) and has been subsequently refined (as a correspondence) by Gross and Zagier \cite[Prop.2.7]{gross-zagier:singular}. We will deduce the correspondence from a 
recent result of the first author and Cornut \cite{cornut-jetchev}. 

\subsection{Galois orbits of Heegner points}
We start by proving that there are exactly $2^{\nu(N)}$ Galois orbits of Heegner 
points of conductor $c$, where $\nu(N)$ is the number of distinct prime divisors 
of $N$. 

\begin{lem}\label{lem:galorbitsheeg}
Suppose that $(c, N) = 1$. Then there are exactly $2^{\nu(N)}$ Galois orbits of Heegner points of conductor $c$ on $X_0(N)$ and each of these orbits has size $\#\Pic(\cO_{D, c})$. 
\end{lem}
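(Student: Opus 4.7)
The plan is to parametrize Heegner points of conductor $c$ as pairs $([\a], \n)$ where $[\a] \in \Pic(\cO_{D,c})$ and $\n \subset \cO_{D,c}$ is an invertible ideal with cyclic quotient $\cO_{D,c}/\n \isom \Z/N\Z$, and then compute the orbits of the Galois action $\Gal(K_D[c]/K_D) \isom \Pic(\cO_{D,c})$ on this parametrization.

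First, I would recall the moduli description of $X_0(N)$: a Heegner point of conductor $c$ corresponds to an isomorphism class of cyclic $N$-isogeny $\phi : E \to E'$ with $\End(E) = \End(E') = \cO_{D,c}$. Via the complex-analytic uniformization this becomes precisely a pair $([\a], \n)$ as above, with $\phi$ the natural projection $\C/\a \to \C/\a\n^{-1}$, whose kernel $\n^{-1}\a/\a \isom \cO_{D,c}/\n$ is cyclic of order $N$; two pairs define the same point if and only if both coordinates coincide.

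Next, I would count the admissible $\n$. For each prime $p \mid N$ with $p^{e_p} \| N$, the hypothesis $D \in \cD_N$ ensures that $p$ splits in $K_D$, and since $(c,N)=1$ the splitting persists in $\cO_{D,c}$, giving $p\cO_{D,c} = \p_p \overline{\p}_p$. The invertible ideals of norm $p^{e_p}$ are exactly $\p_p^{a} \overline{\p}_p^{e_p - a}$ for $0 \leq a \leq e_p$; the quotient is cyclic of order $p^{e_p}$ if and only if $a = 0$ or $a = e_p$, giving two choices. Multiplying over the $\nu(N)$ prime divisors of $N$ yields $2^{\nu(N)}$ admissible $\n$, which, combined with the previous step, gives a total of $2^{\nu(N)} \cdot \#\Pic(\cO_{D,c})$ Heegner points of conductor $c$.

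Finally, I would invoke the Shimura reciprocity law, which gives $\sigma_\mathfrak{b} \cdot ([\a], \n) = ([\mathfrak{b}^{-1}\a], \n)$: the action is free and transitive on the first coordinate and trivial on the second, so the orbits are indexed by the $2^{\nu(N)}$ choices of $\n$, each of size $\#\Pic(\cO_{D,c})$. The main subtle point will be confirming that the reciprocity formula truly fixes $\n$ rather than swapping it with $\overline{\n}$ at primes dividing $N$; this comes down to the fact that $\n$ is intrinsically determined by the kernel of the isogeny as an $\cO_{D,c}$-module, a datum preserved by $\Gal(K_D[c]/K_D)$ (whereas complex conjugation, which lies outside this subgroup, does interchange $\n$ and $\overline{\n}$).
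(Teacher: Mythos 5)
Your proof is correct and takes essentially the same approach as the paper: both parametrize Heegner points of conductor $c$ by pairs $([\a],\n)$ and obtain the count $2^{\nu(N)}$ from the two choices of prime above each $p\mid N$ (the cyclicity of $\cO_{D,c}/\n$ ruling out mixed products), with $\Gal(K_D[c]/K_D)\isom\Pic(\cO_{D,c})$ acting simply transitively on the ideal-class coordinate and trivially on $\n$. You merely make explicit the Shimura-reciprocity step (that $\n$ is fixed rather than swapped with $\overline{\n}$), which the paper leaves implicit.
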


\begin{proof}
Consider the set of all Heegner points of conductor 
$c$ on $X_0(N)$. They could be described as pairs $([\mathfrak a], \mathfrak n)$ of an ideal class $[\mathfrak a]$ and an ideal 
$\mathfrak n \subset \cO_K$ with the property that $\cO_K / \mathfrak n \isom \Z / N\Z$. 
The last property is equivalent to the fact that $\mathfrak n$ is primitive of norm $N$ ($\mathfrak n$ 
being primitive means that there is no rational prime number dividing $\mathfrak n$). Equivalently, if $N = p_1^{e_1} \dots p_t^{e_t}$ are the distinct prime divisors 
of $N$, we want $\mathfrak n = \mathfrak p_1^{e_1} \dots \mathfrak p_t^{e_t}$, where $\mathfrak p_i$ is one of the primes of $\cO_K$ above $p_i$ (indeed, if both $\p_i$ and 
$\overline{\p}_i$ occur then $\n$ would be divisible by $p_i$ and hence, would not be primitive).  
\end{proof} 

\subsection{Modular curves and Shimura curves}
Let $\Gamma$ be a congruence subgroup of $\SL_2(\Z)$ and let $U = U(\Gamma)$ be the closure of $\Gamma$ in $\SL_2(\Af)$. 
The group $\SL_2(\Q)$ admits a left action on  
$\h$ by linear fractional transformations and a left action on $\SL_2(\mathbb{A}_f)$ by left multiplication. Thus, $\SL_2(\Q)$ acts on the left on $\h \times \SL_2(\mathbb{A}_f)$. Moreover, $U$ has a right action on $\h \times \SL_2(\mathbb{A}_f)$ by acting trivially on $\h$ and by right multiplication on $\SL_2(\mathbb{A}_f)$. Strong approximation (see~\cite[p.81]{vigneras:quaternion}) gives a homeomorphism  
$$
Y(\Gamma) := \Gamma \backslash \h \ra \SL_2(\Q) \backslash \h \times \SL_2(\mathbb{A}_f) / U, \ z \mapsto [z, 1].   
$$
Let $H$ be the compact open subgroup of $\GL_2(\Af)$ that is the closure 
(in $\GL_2(\Af)$) of the image of $U$ under the inclusion 
$\SL_2(\Af) \hra \GL_2(\Af)$. We define the Shimura curve corresponding to the 
compact open subgroup $H$ as 
$$
\Sh_H = \GL_2(\Q) \backslash (\C \backslash \R) \times \GL_2(\Af) / H. 
$$
We shall see that $\Sh_H$ is a disjoint union of two copies of $Y(\Gamma)$. Indeed, 
consider the map 
$$
\phi : \Sh_H \ra \Q^\times \backslash \{\pm 1\} \times \Af^\times / \det(H)
$$
given by $[z, g] \mapsto [\textrm{sgn}(\textrm{Im}(z)), \det(g)]$. The fiber of this 
map over the point $[+1, 1]$ is isomorphic to $\SL_2(\Q) \backslash \SL_2(\Af) / U \isom Y(\Gamma)$.  Since $\det(H)$ is open, it follows that the quotient $\Q^\times \backslash \Af^\times / \det(H)$ is discrete. Since $\Q^\times \backslash \Af^\times \isom \Zhat^\times$ is compact, the double quotient $\Q^\times \backslash \Af^\times / \det(H)$ 
is finite. The quotient $\Q^\times \backslash \{\pm 1\} \times \Af^\times / \det(H)$ 
describes the connected components of the Shimura curve $\Sh_H$. For instance, for 
classical modular curves,   
$$
\GL_2(\Q) \backslash (\C \backslash \R) \times \GL_2(\Af) / H \isom Y(\Gamma)^{+} \sqcup 
Y(\Gamma)^{-}. 
$$

\subsection{Adelic description of CM points}
\noindent \emph{1. CM points on the Shimura curve $\Sh_H$.} 
Fix an embedding $K \hra M_2(\Q)$. This gives us an embedding 
$T \hra \GL_2$, where $T := \Res_{K/\Q} K^\times$. Consider the set $\CM_H$ of 
all points of the form $[g, h] \in \Sh_H$ whose stabilizer is a torus isomorphic to 
$K^\times$. It is easy to verify that an element $z \in \C \backslash \R$ is in $K$ 
if and only if $\Stab_{\GL_2(\Q)}(z)$ is isomorphic to $\Res_{K/\Q} K^\times = T$. 
This allows us to conclude that $\CM_H$ admits an adelic description as the double 
quotient $T(\Q) \backslash \GL_2(\mathbb{A}_f) / H$. Indeed, a point in $\CM_H$ is represented by a pair $[z, g]$, where $z \in \C \backslash \R$ is in $K$ and $g \in \GL_2(\mathbb{A}_f)$. Since all $z \in K$ are $\GL_2(\Q)$-conjugates and since the stabilizer of each $z$ in $\GL_2(\Q)$ is isomorphic to $T(\Q)$, we obtain 
$$
\CM_H \isom T(\Q) \backslash \GL_2(\mathbb{A}_f) / H. 
$$

\vspace{0.1in}

\noindent \emph{3. Conductors of CM points.}
Here, we assume that $R = (R', R'')$ is an oriented Eichler order of $M_2(\Q)$ of 
level $N$ (i.e., $R'$ and $R''$ are maximal orders and $R = R' \cap R''$) and 
consider the Shimura curve $\Sh_H$, where $H = \widehat{R}^\times$. Consider 
the two degeneracy maps 
$$
\delta' : \CM_H \ra T(\Q) \backslash \GL_2(\mathbb{A}_f) / \widehat{R'}^\times
$$
and 
$$
\delta'' : \CM_H \ra T(\Q) \backslash \GL_2(\mathbb{A}_f) / \widehat{R''}^\times. 
$$

Given a CM point $x \in T(\Q) \backslash G(\Af) / \widehat{R}^\times$ such that $x = [g]$, let $x'$ and $x''$ be the images of $x$ in $T(\Q) \backslash G(\Af) / \widehat{R'}^\times$ and $T(\Q) \backslash G(\Af) / \widehat{R''}^\times$, respectively. The stabilizer
$$
\Stab_{\widehat{K}^\times}(x') = \widehat{K}^\times \cap g \widehat{R'}^\times g^{-1} = \widehat{\cO(x')}^\times
$$ 
for some order $\cO(x') \subseteq \cO_K$. Let $c(x')$ be the conductor of that order. Similarly, we obtain an integer $c(x'')$ for $R''$. 
The conductor $\bc(x)$ is then defined as
$$
\bc(x) := \lcm(c(x'), c(x'')). 
$$ 

\begin{rem}
Note that if $q$ is a prime that divides one of $c(x')$ and $c(x'')$, but not the other one, then $q$ necessarily divides $N$. This shows that if $(c, N) = 1$, all CM points 
of conductor $c$ will be in fact Heegner points (i.e, $c(x') = c(x'')$).    
\end{rem}

\begin{rem}
For $\Gamma = \Gamma_0(N)$, i.e., for the modular curve $X_0(N)$, these degeneracy maps correspond precisely to the two degeneracy maps $\delta_1, \delta_N : X_0(N) \ra X(1)$ that 
map $[E, C]$ to $[E]$ and $[E/C]$, respectively.  
\end{rem}

\begin{rem}
If $X = X_0(N)$, a CM point $[\tau] \in \Gamma_0(N) \backslash \h$ would correspond 
to the pair of $N$-isogenous CM elliptic curves $E' = \C / \langle 1, \tau \rangle$ and 
$E'' = \C / \langle 1, N\tau\rangle$. Then $\cO' = \End(E')$ and $\cO'' = \End(E'')$ are both orders in $K = \Q(\sqrt{-D})$. Let $c'$ and $c''$ be their conductors, respectively.  
The conductor of the point $[E', E'']$ is then $\bc([E', E'']) = \lcm(c', c'')$.   
\end{rem}

\subsection{Optimal embeddings and Gross points}\label{subsec:gross-points}
Let $B_{\ell, \infty}$ be the unique quaternion algebra ramified precisely at $\ell$ 
and $\infty$ and let $G' := B_{\ell, \infty}^\times$ be the corresponding algebraic 
group. Let $R_1, \dots, R_h$ be the Eichler orders of level $N$ defined in Section~\ref{subsec:main}. 

\begin{lem}
The set of pairs $(f: \cO \hra {R_i}_{/R_i^{\times}}, [R_i])$ of an ideal class $[R_i]$ 
of $G'(\Q) \backslash G'(\mathbb{A}_f) / \Rhat^\times$ and a 
$R_i^\times$-conjugacy class of optimal embeddings $f : \cO \hra {R_i}_{/R_i^\times}$ for some quadratic order $\cO$ in $K$ is in one-to-one correspondence with the double adelic quotient 
$$
T(\Q) \backslash G'(\mathbb{A}_f) / \widehat{R}^\times.  
$$  
\end{lem}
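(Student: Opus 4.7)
The plan is to make the bijection explicit by using the fixed embedding $K \hookrightarrow B_{\ell,\infty}$ (underlying $T \hookrightarrow G'$) to transport the adelic data on the right into the embedding/ideal-class data on the left. Given $g \in G'(\Af)$, decompose its image in $G'(\Q) \backslash G'(\Af) / \widehat{R}^\times$ as $g = \gamma g_i u$, where $g_i$ is the chosen representative of the ideal class $[R_i]$, $\gamma \in G'(\Q)$, and $u \in \widehat{R}^\times$. Define
\[
\cO_g := K \cap g\widehat{R}g^{-1},
\]
an order in $K$, and let $f_g : \cO_g \hookrightarrow R_i$ be the restriction of $x \mapsto \gamma^{-1} x \gamma$ to $\cO_g$. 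The inclusion $f_g(\cO_g) \subset R_i$ follows from $\gamma^{-1}(K \cap g\widehat{R}g^{-1})\gamma \subset B_{\ell,\infty} \cap g_i \widehat{R}g_i^{-1} = R_i$. Assign to $[g]$ the pair $(f_g, [R_i])$.

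I would then verify well-definedness along three axes. First, the ambiguity in the decomposition $g = \gamma g_i u$: any other choice differs by $\gamma \mapsto \gamma \alpha$ with $\alpha \in G'(\Q) \cap g_i \widehat{R}^\times g_i^{-1} = R_i^\times$, which modifies $f_g$ precisely by an $R_i^\times$-conjugation, as allowed. Second, the $T(\Q) \times \widehat{R}^\times$-action: replacing $g$ by $t g u_0$ with $t \in K^\times$ leaves $\cO_g$ unchanged (because $K$ is commutative, so $t\cO_g t^{-1} = \cO_g$) and yields the same $R_i^\times$-class of embedding with the new Skolem witness $t\gamma$. Third, optimality: by construction $f_g(K) \cap R_i = \gamma^{-1}(K \cap g\widehat{R}g^{-1})\gamma = f_g(\cO_g)$, so $f_g$ is an optimal embedding of exactly the order $\cO_g$.

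For the inverse direction, start from a pair $(f : \cO \hookrightarrow R_i, [R_i])$. Extend $f$ to a $\Q$-algebra embedding $K \hookrightarrow B_{\ell,\infty}$, and invoke Skolem--Noether to find $\gamma \in G'(\Q)$ with $f(x) = \gamma^{-1} x \gamma$ for all $x \in K$; set $g := \gamma g_i$ and assign to the pair the class $[g] \in T(\Q) \backslash G'(\Af) / \widehat{R}^\times$. The Skolem--Noether element $\gamma$ is unique up to left multiplication by the centralizer of $K$ in $G'(\Q)$, which is $T(\Q)$, so $[g]$ is unaffected. Replacing $f$ by a conjugate $\alpha f \alpha^{-1}$ with $\alpha \in R_i^\times$ changes $\gamma$ into $\gamma \alpha$, and since $\alpha g_i = g_i (g_i^{-1} \alpha g_i)$ with $g_i^{-1} \alpha g_i \in \widehat{R}^\times$, the double coset is preserved.

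The one substantive step, rather than pure bookkeeping, is checking that these two constructions are mutually inverse, which reduces to the assertion $\cO_g = \cO$ when $g = \gamma g_i$ is built from $f$. This in turn reduces to a local identity at each finite prime $q$: writing $R_{i,q} = R_i \otimes \Z_q$, the optimality condition $f(K) \cap R_i = f(\cO)$ localizes to $f(K_q) \cap R_{i,q} = f(\cO \otimes \Z_q)$, and conjugating by $\gamma$ turns this into $K_q \cap (g\widehat{R}g^{-1})_q = \cO \otimes \Z_q$, i.e.\ $\cO_g = \cO$. The main obstacle is simply organizing this well-definedness bookkeeping cleanly; the geometric content--Skolem--Noether together with the adelic dictionary between orders and their local completions--is standard.
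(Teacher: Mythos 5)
Your proof is correct and rests on the same two ingredients as the paper's own argument: Skolem--Noether (all embeddings of $K$ into $B_{\ell,\infty}$ are $G'(\Q)$-conjugate, with centralizer $T(\Q)$) together with the unfolding of $T(\Q) \backslash G'(\mathbb{A}_f) / \widehat{R}^\times$ over the ideal classes $G'(\Q) \backslash G'(\mathbb{A}_f) / \widehat{R}^\times$. The paper compresses this unfolding into a one-line product decomposition, whereas you make the fibers explicit — identifying the stabilizer $G'(\Q) \cap g_i \widehat{R}^\times g_i^{-1} = R_i^\times$ and the order $\cO_g = K \cap g\widehat{R}g^{-1}$, whose optimality encodes the dictionary — so yours is a more careful rendering of essentially the same argument.
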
 

\begin{proof}
Given an order $R_i$ representing an ideal class $[R_i]$, the set of 
$R_i^\times$-conjugacy classes of optimal embeddings $f : \cO \hra R_i$ is in 
bijection with $T(\Q) \backslash G'(\Q)$ (since all the embeddings of $K$ into 
$B_{\ell, \infty}$ are conjugate). Therefore, the set of the desired pairs is 
in bijection with 
$$
T(\Q) \backslash G'(\Q) \times G'(\Q) \backslash G'(\mathbb{A}_f) / \Rhat^\times \isom 
T(\Q) \backslash G'(\mathbb{A}_f) / \Rhat^\times. 
$$ 
\end{proof}

\subsection{Adelic description of supersingular points}\label{subsec:adelic_supersingular}
The set $X_0(N)^{\SSS}_{/\F_{\ell^2}}$ is in bijection with the double quotient 
$G'(\Q) \backslash G'(\Af) / \widehat{R'}^\times$ where $R'$ is an Eichler order of 
level $N$ for $B_{\ell, \infty}$ that is the ring of endomorphisms of a fixed enhanced supersingular elliptic curve $\mathbb{E}_0 = (\tilde{E}_0, \tilde{C}_0)$. We briefly summarize the bijection and refer the reader to~\cite[Prop.3.3]{ribet:modreps} for the details. 

Let $\mathbb{E}$ be any enhanced elliptic curve and take an endomorphism 
$\lambda \in \Hom(\mathbb{E}, \mathbb{E}_0) \otimes \Q$ (here, we use the fact that there is a 
single isogeny class of supersingular elliptic curves). One could use $\lambda$ to 
identify the adelic Tate module $\widehat{T}(\mathbb{E})$ with a sublattice of $\widehat{V}(\mathbb{E}_0)$. This means that there is a unique element $g \in G'(\mathbb{A}_f) / \widehat{R}^\times$ that sends this sublattice to $\widehat{T}(\mathbb{E}_0)$. Since $g$ is dependent on the choice of $\lambda$, 
it makes sense only in $G'(\Q) \backslash G'(\mathbb{A}_f) / \widehat{R}^\times$. 
This gives us a bijection 
$$
\vphi : X_0(N)^{\SSS}_{/\F_{\ell^2}} \ra G'(\Q) \backslash G'(\mathbb{A}_f) / \widehat{R}^\times.  
$$ 

\subsection{Heegner points on definite and indefinite quaternion algebras} 
The probability measures $\mu_c$ are defined in terms of the cardinalities 
$\left | \red_\ell^{-1}(s) \cap \Gamma_{D, c} \right |$. Let $s \in X_0(N)^{\SSS}_{/\F_{\ell^2}}$ be a supersingular point and  let $h(\cO_{D, c}, R_s)$ 
be the number of $R_s^\times$-conjugacy classes of optimal embeddings 
$\cO_{D, c} \hra R_s$.  

We will apply the theorem of Cornut--Jetchev \cite[Thm~1.5]{cornut-jetchev} 
together with the above adelic interpretations of CM points, optimal embeddings and supersingular points to deduce the following corollary:   

\begin{cor}
We have 
$$
h(\cO_{D, c}, R_s) = 2^{\nu(N)} \left | \{x \in \Gamma_{D, c} \colon \red_\ell(x) = s\} \right |. 
$$
\end{cor}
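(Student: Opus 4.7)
The plan is to translate the claimed equality into an identity between sizes of fibers of natural maps between adelic double coset spaces, and then invoke the Cornut--Jetchev correspondence \cite[Thm.~1.5]{cornut-jetchev}.

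First, I would assemble the ingredients already recorded in Section~\ref{sec:deuring}: the set of \emph{all} Heegner points of conductor $c$ on $X_0(N)$ corresponds to the subset of $T(\Q)\backslash\gltwo(\Af)/\widehat{R}^\times$ consisting of classes $[g]$ with $\bc([g])=c$, where $R$ is an oriented Eichler order of level $N$ in $M_2(\Q)$. By Lemma~\ref{lem:galorbitsheeg}, this set has cardinality $2^{\nu(N)}\#\Pic(\cO_{D,c})$ and splits as a disjoint union of $2^{\nu(N)}$ $\Gal(K_D[c]/K_D)$-orbits of equal size $\#\Pic(\cO_{D,c})$, one of which is $\Gamma_{D,c}$. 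Under the bijection $\vphi$ of Section~\ref{subsec:adelic_supersingular}, a supersingular point $s\in X_0(N)^{\SSS}_{/\F_{\ell^2}}$ corresponds to a class $[R_s]\in G'(\Q)\backslash G'(\Af)/\widehat{R}^\times$. By the adelic description in Section~\ref{subsec:gross-points}, the fiber over $[R_s]$ of the forgetful map
$$
T(\Q)\backslash G'(\Af)/\widehat{R}^\times\;\longrightarrow\; G'(\Q)\backslash G'(\Af)/\widehat{R}^\times
$$
is in bijection with the set of $R_s^\times$-conjugacy classes of optimal embeddings $\cO_{D,c}\hookrightarrow R_s$, and thus has cardinality $h(\cO_{D,c},R_s)$.

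Next, I would apply \cite[Thm.~1.5]{cornut-jetchev}, which produces a bijection between CM points of conductor $c$ on the indefinite Shimura curve for $\gltwo(\Q)$ and Gross points of conductor $c$ on the definite side for $B_{\ell,\infty}$, intertwining the reduction map $\red_\ell$ on the former with the forgetful map above on the latter. Consequently, the total number of Heegner points of conductor $c$ on $X_0(N)$, summed over all $2^{\nu(N)}$ Galois orbits, whose reduction is $s$, is exactly $h(\cO_{D,c},R_s)$.

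Finally, I would distribute this total evenly among the $2^{\nu(N)}$ Galois orbits. The cleanest way to accomplish this is to observe that the orbits are permuted simply transitively by the group $\langle w_{p_i}:p_i\mid N\rangle$ of Atkin--Lehner involutions, each of which is defined over $\Q$; since $(\ell,N)=1$, these involutions commute with $\red_\ell$ and hence induce bijections between the preimages of $s$ inside the different Galois orbits. In particular, each orbit contributes the same number $h(\cO_{D,c},R_s)/2^{\nu(N)}$ of preimages, which is equivalent to the stated identity.

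The main obstacle is the last step: justifying that the $2^{\nu(N)}$ Galois orbits contribute equally to the fiber over $s$. An alternative, perhaps more intrinsic, route is to refine the Cornut--Jetchev bijection by tracking orientations of the Eichler orders $R$ and $R_s$ at each prime $p_i\mid N$, so that each of the $2^{\nu(N)}$ Galois orbits of Heegner points corresponds precisely to one of the $2^{\nu(N)}$ orientations on $R_s$, and the count of optimal embeddings inducing a fixed orientation is exactly $h(\cO_{D,c},R_s)/2^{\nu(N)}$. Either approach will yield the same conclusion, while everything upstream of this point is essentially a bookkeeping exercise with adelic double cosets.
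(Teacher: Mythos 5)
Up to your final step, your argument is the same as the paper's: the proof in the text consists exactly of invoking the Cornut--Jetchev theorem together with the adelic dictionaries for CM points, Gross points and supersingular points to identify the set of \emph{all} conductor-$c$ CM points reducing to $s$ with the set of $R_s^\times$-conjugacy classes of optimal embeddings $\cO_{D,c}\hra R_s$, and then appealing to Lemma~\ref{lem:galorbitsheeg} for the count of $2^{\nu(N)}$ Galois orbits of equal size. So everything in your proposal up to and including ``the total number of Heegner points of conductor $c$ reducing to $s$ is $h(\cO_{D,c},R_s)$'' is precisely the paper's route.

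The gap is in your last step, the equal splitting of this total among the $2^{\nu(N)}$ orbits. The Atkin--Lehner argument does not do what you claim: $w_{p}$ is defined over $\Q$ and commutes with $\red_\ell$, but it acts nontrivially on $X_0(N)^{\SSS}_{/\F_{\ell^2}}$, so it carries $\{x\in\Gamma_{D,c}:\red_\ell(x)=s\}$ bijectively onto $\{x\in w_p\Gamma_{D,c}:\red_\ell(x)=w_p(s)\}$, i.e.\ onto the fiber over the possibly different supersingular point $w_p(s)$ inside the other orbit, not onto the preimage of the same $s$. Equality of the contributions of the different orbits to the fiber over a fixed $s$ would follow this way only if $s$ were fixed by all the $w_p$, which fails in general. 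Your orientation-refinement fallback runs into the same issue: tracking orientations correctly shows that the fiber of $\red_\ell$ restricted to $\Gamma_{D,c}$ over $s$ counts exactly those embedding classes whose induced orientation at the primes dividing $N$ is the one determined by $\n_D$, but the assertion that this oriented count equals $h(\cO_{D,c},R_s)/2^{\nu(N)}$ for each fixed $s$ is precisely what has to be proven; conjugating by the Atkin--Lehner elements only identifies the count at $s$ with a changed orientation with the reference-oriented count at the corresponding Atkin--Lehner translate of $s$, and these counts at different translates need not coincide for a fixed conductor (their agreement in the limit is exactly the kind of statement the equidistribution theorems are after). To be fair, the paper's own proof is equally terse at this point --- it passes from the total count to the per-orbit count using only the orbit count of Lemma~\ref{lem:galorbitsheeg} --- so you have isolated the genuinely delicate step, but neither of your two proposed justifications closes it.
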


\begin{proof}
By~\cite[Thm~1.4]{cornut-jetchev} and the adelic interpretation of CM points on the definite and the indefinite algebras as well as the adelic desciption of the supersingular points, the subset of CM points on $X_0(N)$ of conductor $c$ reducing to a fixed supersingular point $s$ is in bijection with the $R_s^\times$-conjugacy classes of optimal embeddings $f : \cO_c \hra R_s$. Since $(c, N) = 1$, all CM points on $X_0(N)$ are Heegner points and by Lemma~\ref{lem:galorbitsheeg} there are exactly $2^{\nu(N)}$ such orbits.    
\end{proof}

The corollary shows that 
$$
\mu_c(s) = \frac{\left | \{ x \in \Gamma_{D, c} \colon \red_\ell(x) = s \} \right | }
{\left | \Gamma_{D, c}\right|} = \frac{h(\cO_{D, c}, R_s)}{\left | \Pic(\cO_{D, c})\right |}. 
$$ 

In section \ref{sec:proof}, the number $h(\cO_{D, c}, R_s)$ will be related to 
primitive representations of $d_c = -Dc^2$ by a certain quadratic form associated to $R_s$. 


\section{Modular forms of half-integral weight and Shimura correspondence} \label{sec:backmodular}
Let $\lambda$ be a non-negative integer and consider the space 
$M_{\lambda+\frac{1}{2}}(\Gamma_0(4M), \chi)$ of modular forms of weight $\ds \lambda + \frac{1}{2}$. Let $S_{\lambda + \frac{1}{2}}(\Gamma_0(4M), \chi)$ be the space of cusp forms.  Let $q:=e^{2\pi i z}$ and $\psi$ be an odd Dirichlet character of conductor $r(\psi)$.  We will refer to the form
\begin{equation}\label{eqn:one-dim}
h_{\psi, t}(z) := \sum_{m \geq 1} \psi(m) m e^{2 \pi i tm^2 z} = \sum_{m \geq 1} \psi(m) m q^{tm^2}\in S_{3/2}(4 r(\psi)^2,\psi\cdot \chi_{-4})
\end{equation}
as a \begin{it}one-dimensional theta series\end{it}.  Due to the exceptional behaviour of these forms, we will often decompose $S_{3/2}(4M)$ into the subspace spanned by one-dimensional theta series and the orthogonal complement of this space under the Petersson inner product, and then investigate each separately.

\subsection{Modular forms of half-integral weight and convolutions with $L$-series}
Suppose that $g(z) \in S_{\lambda + \frac{1}{2}}(\Gamma_0(4M), \chi)$. 
Let $t$ be a positive square-free integer and let 
$$
\ds \psi_t(n) := \chi(n) \left ( \frac{-1}{n} \right )^{\lambda} \left 
( \frac{t}{n}\right ).
$$
Suppose that the complex numbers $A_t(n)$ are defined by 
$$
\sum_{n = 1}^\infty \frac{A_t(n)}{n^{s}} := L(s - \lambda + 1, \psi_t) \cdot \sum_{n=1}^\infty \frac{b(tn^2)}{n^s}. 
$$ 
Shimura then proved that the $t$-th \begin{it}Shimura correspondence\end{it} $\ds S_{t, \lambda}(g(z)) := \sum_{n=1}^\infty A_t(n)q^n$ is 
a modular form in $M_{2\lambda}(\Gamma_0(2N), \chi^2)$ of weight $2\lambda$. 

Kohnen then defined a subspace $S_{\lambda + \frac{1}{2}}^+(\Gamma_0(4M))$, referred to as Kohnen's plus space, consisting of 
forms $g(z)$ of weight $\ds \lambda + \frac{1}{2}$ on $\Gamma_0(4M)$ with Fourier coefficients of the form 
$$
g(z) = \sum_{(-1)^\lambda n \equiv 0,1 \mod 4} b(n) q^n. 
$$
In this space Kohnen extended the definition of the Shimura correspondence $S_{t,\lambda}$ to $t':=(-1)^{\lambda}D$ where $D$ is a fundamental discriminant.  For $D\equiv 1\pmod{4}$ we take $S_{t',\lambda}:=S_{t,\lambda}$ as previously defined and for $D\equiv 0\pmod{4}$ we take $S_{t',\lambda}:=S_{t,\lambda}|U(4)$.  Kohnen's plus space decomposes into new and old subspaces as follows:  
$$
S_{\lambda + \frac{1}{2}}^+(\Gamma_0(4M)) = S_{\lambda + \frac{1}{2}}^{\new}(\Gamma_0(4M)) 
\oplus S_{\lambda + \frac{1}{2}}^{\old}(\Gamma_0(4M)). 
$$
Kohnen used this decomposition and the Shimura correspondences 
$$
S_{t', \lambda} : S_{\lambda + \frac{1}{2}}^{\new}(\Gamma_0(4M)) \ra S_{2\lambda}(\Gamma_0(N))
$$
to prove that there exists a finite linear combination of $S_{t', \lambda}$'s which provides 
an isomorphism  
\begin{equation}\label{eqn:correspondence}
S : S_{\lambda + \frac{1}{2}}^{\new}(\Gamma_0(4M)) \ra S_{2 \lambda}(\Gamma_0(N))  
\end{equation}
that is Hecke equivariant. The image of a half-integral weight Kohnen newform 
in $S_{\lambda + \frac{1}{2}}^{\new}(\Gamma_0(4M))$ is a newform in $S_{2\lambda}^{\new}(\Gamma_0(M))$ whose Hecke eigenvalues are the same.

\comment{
\subsection{Open questions}
\begin{question}
Is the Kohnen's plus space what we think the new space in 
$S_{\lambda+1/2}(\Gamma_0(4N))$ is, namely, we take the space generated by the images of $S_{\lambda+1/2}(\Gamma_0(N'))$, where $N' \mid N$ under all possible degeneracy maps 
and then declare $S^+_{\lambda+1/2}(\Gamma_0(4N))$ to be the orthogonal complement of 
that subspace under the Petersson inner product? 
\end{question}

\begin{question}
Is there a Shimura lift of $\theta_{Q_s} - \theta_{\gen(Q_s)}$? Should we use $S_D$ for 
the Shimura lift? 
\end{question}

\begin{itemize}
\item Space of modular forms of half-integral weight. 

\item Say what $S^+_{1/2+\lambda}(\Gamma_0(4M))$ is (write congruences)? 

\item If $f \in S_{1/2+\lambda}(\Gamma_0(4M))$ then if 
$$
\sum_{n \geq 1} \frac{A_t(n)}{n^s} = L(s - \lambda + 1) \sum_{n \geq 1} \frac{a_{tn^2}}{n^s},  
$$
then Shimura (see~\cite{blah}) defined the function 
$$
S_{t, \lambda}(f) = \sum_{n \geq 1} A_t(n) q^n,  
$$
which turns out to be a modular form in $M_{2\lambda}(4N)$. Kohnen 
(see~\cite{kohnen}) further defined a linear combination of the $S_{t, \lambda}$'s that give us a map
$$
S : S^{+}_{\lambda+1/2}(4M) \ra S_{\lambda}(\Gamma_0(M)). 
$$ 
\end{itemize}
}

\section{Equidistribution and ternary quadratic forms}\label{sec:proof}

In order to prove the main theorem, we establish the correspondence between 
optimal embeddings and primitive representations in Section~\ref{subsec:optimal-primrep} 
by associating a quadratic form $Q_s$ to the Eichler order $R_s$. We then compute 
the discriminant of that quadratic form. We introduce the theta series 
$\theta_{Q_s}$ associated to $Q_s$, as well as the series $\theta_{\gen(Q_s)}$ 
and $\theta_{\spn(Q_s)}$ associated to the genus $\gen(Q_s)$ and the spinor genus $\spn(Q_s)$ of $Q_s$, respectively. Finally, using that the form $\theta_{\gen(Q_s)} - \theta_{\spn(Q_s)}$ is in the space spanned by one-dimensional theta series, we are able to prove that the coefficients of $\gen(Q_s)$ and $\spn(Q_s)$ coincide away from the primes dividing 
$N\ell$. We use bounds on Fourier coefficients of modular forms of half-integral weight 
that lie in the the orthogonal complement (under the Petersson inner product) 
of the space spanned by one-dimensional theta series (due to Iwaniec and Duke) 
to conclude the proof of Theorem~\ref{thm:main1} and Theorem~\ref{thm:main2}.

\subsection{Optimal embeddings and primitive representations by ternary quadratic forms}\label{subsec:optimal-primrep}
Let $s \in X_0(N)^{\SSS}_{/\F_{\ell^2}}$ and let $R_s := \End(s)$ be the ring of endomorphisms 
of $s$. Recall the notation $w_s := \# R_s^\times$ and $u_{D, c} := \# \cO_{D, c}$. 

\vspace{0.1in}

\noindent \emph{1. A ternary quadratic form associated to an Eichler order.} 
Let $V \subset R_s$ be the set of elements of trace zero. Following 
\cite[pp.171--172]{gross:heights} define 
$$
G_s := (2R_s + \Z) \cap V. 
$$
The $\Z$-module $G_s$ is free of rank 3. Define a quadratic form $Q_s : G_s \ra \Q$ by  
$$
Q_s(b) := \nr(b).  
$$
 
\vspace{0.1in}

\noindent \emph{2. Correspondence between optimal embeddings and 
primitive representations.}

Let $f : \cO_{D, c} \hra R_s$ be an embedding (not necessarily optimal) and let 
$\beta := f(\sqrt{-d_c})$. Notice that $\Tr(\beta) = 0$ and $\nr(\beta) = d_c$. 
We claim that $\beta \in G_s$. Indeed, since $\ds \cO_{D, c} = \Z + \frac{d_c + \sqrt{-d_c}}{2}\Z$, it follows that 
$\ds 2f \left (\frac{d_c + \sqrt{-d_c}}{2} \right) = d_c + \beta$, i.e., 
$$
\beta \equiv -d_c \mod 2R_s. 
$$
Therefore, $\beta \in (\Z + 2R_s) \cap V = G_s$, i.e., $Q_s(\beta) = d_c$ is a representation. 

Conversely, suppose that $\beta \in G_s$ and $Q_s(\beta) = d_c$. We claim that 
$\beta \equiv -d_c \mod 2R_s$. Indeed, let $\beta = \gamma + 2r$ for some $\gamma \in \Z$ and 
$r \in R_s$. Then 
\begin{equation}\label{eqn:disc}
d_c = Q_s(\beta) = \nr(\beta) = \beta \overline{\beta} = -\beta^2 = -(\gamma + 2r)^2 \equiv -\gamma^2 \mod 4R_s. 
\end{equation}
Thus, 
$$
\beta = \gamma + 2r \equiv (\gamma + \gamma^2) - \gamma^2 \equiv d_c\equiv -d_c \mod 2R_s.   
$$
Now, we can define an embedding $f : \cO_{D, c} \hra R_s$ by 
$$
f \left ( \frac{d_c + \sqrt{-d_c}}{2} \right ) := \frac{d_c + \beta}{2} \in R_s.
$$

We next show under the established correspondence that optimal embeddings correspond to primitive representations. 

\begin{lem}
The embedding $f$ is optimal if and only if the representation $Q_s(\beta) = d_c$ is primitive. 
\end{lem}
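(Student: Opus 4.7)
The plan is to work with the $\Q$-linear extension $\tilde f\colon K \hookrightarrow B_{\ell,\infty}\otimes\Q$ of $f$, which satisfies $\tilde f(\sqrt{-D})=\beta/c$ since $\sqrt{-d_c}=c\sqrt{-D}$. The key observation is that $f$ fails to be optimal precisely when $\tilde f(\cO_{D,c'})\subseteq R_s$ for some strictly larger order $\cO_{D,c'}$ (i.e.\ $c'\mid c$, $c'<c$, equivalently $m:=c/c'>1$), and I will translate this into the divisibility of $\beta$ inside $G_s$ by such an integer $m$.

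For the translation, observe that $\tilde f(\sqrt{-d_{c'}})=\beta/m$ and $\cO_{D,c'}=\Z\bigl[\tfrac{d_{c'}+\sqrt{-d_{c'}}}{2}\bigr]$, so $\tilde f(\cO_{D,c'})\subseteq R_s$ if and only if $\tfrac{d_{c'}+\beta/m}{2}\in R_s$, equivalently $\beta/m\equiv -d_{c'}\pmod{2R_s}$. Since $-d_{c'}\in\Z$ and $\beta/m\in V$, this last condition is precisely $\beta/m\in(\Z+2R_s)\cap V=G_s$, and then $Q_s(\beta/m)=d_c/m^2=d_{c'}$. Therefore $f$ is non-optimal iff there exists a divisor $m\mid c$ with $m>1$ and $\beta/m\in G_s$.

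To match this with primitivity, I must further show that any integer $m>1$ with $\beta/m\in G_s$ automatically satisfies $m\mid c$. Positivity of $Q_s(\beta/m)=d_c/m^2\in\Z_{>0}$ forces $m^2\mid -Dc^2$, and repeating the congruence argument of \eqref{eqn:disc} at the level of $\beta/m$ forces $d_c/m^2\equiv 0$ or $3\pmod 4$. A short case analysis using that $D$ is a fundamental discriminant (so $|D|$ has squarefree odd part and, if $4\mid D$, then $D/4\equiv 2$ or $3\pmod 4$ with $D/4$ squarefree) rules out any prime-power of $m$ not dividing the corresponding prime-power of $c$: the only potential escape is $v_2(m)=v_2(c)+1$ with $4\mid D$, but in that case $d_c/m^2=-(D/4)\cdot(\text{odd})^2\equiv 1$ or $2\pmod 4$, contradicting the required congruence. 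Combining, $f$ is optimal iff no integer $m>1$ has $\beta/m\in G_s$, which is exactly the primitivity of the representation $Q_s(\beta)=d_c$.

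The main technical step, and the only real work, is the mod-$4$ case analysis ensuring $m\mid c$; the rest of the argument is a direct bookkeeping translation from optimality into the arithmetic of $G_s$, mirroring the congruence $\beta\equiv -d_c\pmod{2R_s}$ already established in the paragraph preceding the lemma.
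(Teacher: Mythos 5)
Your argument is correct, and it shares the paper's basic mechanism (the generator $\frac{d_{c'}+\sqrt{-d_{c'}}}{2}$ of $\cO_{D,c'}$ together with the congruence $\beta\equiv -d_c \bmod 2R_s$), but it diverges from the paper in the direction ``non-primitive $\Rightarrow$ non-optimal''. The paper takes an arbitrary factorization $\beta=k\alpha$ with $\alpha\in G_s$ and directly exhibits $\gamma=\frac{d+\alpha}{2}$ as an element of $f(K_D)\cap R_s$ not lying in $f(\cO_{D,c})$; this needs no information about $k$ (in particular no use of fundamentality of $D$ in that direction), at the cost of a slightly fiddly verification, split into the cases $k\neq 2$ and $k=2$, that $\gamma\notin f(\cO_{D,c})$. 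You instead prove the auxiliary claim that any $m>1$ with $\beta/m\in G_s$ must divide $c$ --- this is where integrality of $Q_s$ on $G_s$, the congruence $Q_s(\beta/m)\equiv 0,3\pmod 4$, and the structure of fundamental discriminants enter --- and then conclude by noting that the strictly larger order $\cO_{D,c/m}$ embeds into $R_s$; this avoids the paper's case analysis on $k$ and makes the larger order explicit, at the price of the extra mod-$4$ lemma (whose case analysis is indeed correct: only $q=c/\gcd(m,c)\in\{1,2\}$ can occur, and $q=2$ is killed by $D/4\equiv 2,3\pmod 4$). The other direction (non-optimal $\Rightarrow$ non-primitive) is essentially identical in both treatments: $f(K_D)\cap R_s$ is an order $\cO_{D,c'}$ with $c'\mid c$, $c'<c$, whence $\beta/(c/c')\in G_s$. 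One presentational point: where you assert that $\beta/m\equiv -d_{c'}\pmod{2R_s}$ ``is precisely'' $\beta/m\in G_s$, only the forward implication is formal; the reverse requires the parity consequence of $d_{c'}=Q_s(\beta/m)\equiv -a^2\pmod 4$, i.e.\ the analogue of \eqref{eqn:disc}, which you do invoke later, so this is a matter of ordering the steps rather than a gap.
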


\begin{proof}
Suppose that the representation $Q_s(\beta) = d_c$ is non-primitive. We will 
show that $f$ is not an optimal embedding. Indeed, let $\beta = k \alpha$ for some 
$k \in \Z$ and $\alpha \in G_s$. Then $\ds \nr(\alpha) = \frac{d_c}{k^2}$. Let 
$\ds d = \frac{d_c}{k^2}$. Consider the element $\ds \gamma = \frac{d + \alpha}{2}$. 
We claim that $\gamma \in f(K_D) \cap R_s$, but $\gamma \notin f(\cO_{D, c})$ which would imply that $f$ is a non-optimal embedding. 
First, $\ds \gamma = \frac{1}{k^2} f\left ( \frac{d_c + \sqrt{d_c}}{2}\right ) \in f(\cO_{D, c}) \otimes \Q$. Let $\ds \alpha = a + 2r$ for $a \in \Z$ and $r \in R_s$. 
Then $d = \nr(\alpha) = -\alpha^2 \equiv -a^2\mod 2R_s$. Thus, $\alpha = a + 2r \equiv -a^2 \equiv d\equiv -d \mod 2R_s$, i.e., $\gamma \in R_s$. Next, we show that $\gamma \notin f(\cO_{D, c})$. If $k \ne 2$ then $\ds \gamma = \frac{d + \alpha}{2}= \frac{d_c + \beta + dk-dk^2}{2k}$. Since $d_c+\beta = 2f(w) \notin kf(\cO_{D, c})$ then $\gamma \notin f(\cO_{D, c})$. If $k = 2$ then $\ds \gamma =\frac{d_c + \beta - 2d}{4}$. Since 
$d_c+\beta -2d = f(2w - 2d) \notin 4f(\cO_{D,c})$ we obtain the same statement. Thus, 
$\gamma \in f(K_D) \cap R_s$, but $\gamma \notin f(\cO_{D, c})$, i.e., the embedding is 
not optimal. 

Conversely, suppose that $f : \cO_{D, c} \hra R_s$ is a non-optimal embedding. 
Let $\cO = ( f(\cO_{D, c}) \otimes \Q ) \cap R_s$. It follows that $\cO \isom \cO_{D, c'}$, where $c = kc'$ for some $k > 1$. Now, we can choose $\alpha \in (2\cO+\Z) \cap V$, such that $Q_s(\alpha) = -Dc'^2$. Since $(\Z + 2\cO) \cap V$ is a free $\Z$-module of rank 1, we obtain $\beta = k \alpha$, i.e., the representation $Q_s(\beta) = -Dc^2$ is not primitive. 
This proves the lemma.  
\end{proof}

Thus, we have proved the following: 

\begin{prop}\label{prop:corresp}
There is a $\ds \frac{w_s}{ u_{D, c}}$-to-one correspondence between primitive representations 
of the integer $d_c = -Dc^2$ by $Q_s$ and optimal embeddings $f : \cO_{D, c} \hra R_s$.  
\end{prop}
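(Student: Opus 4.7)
The plan is to promote the bijection of the preceding lemma---matching each primitive representation $Q_s(\beta) = d_c$ with an optimal embedding $f : \cO_{D,c} \hra R_s$ via $\beta = f(\sqrt{-d_c})$---to a counting statement relative to the $R_s^\times$-conjugacy action on optimal embeddings. I read ``optimal embeddings'' on the right-hand side as $R_s^\times$-conjugacy classes, which is the interpretation consistent with the use of these objects in Section~\ref{subsec:gross-points} (and is forced on us by the fact that $w_s/u_{D,c}$ must be a positive integer).

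First I would equip both sides with compatible $R_s^\times$-actions. On embeddings, $u \in R_s^\times$ acts by conjugation $f \mapsto u f u^{-1}$. On $G_s$ the same element acts by $\beta \mapsto u\beta u^{-1}$; this fixes both $\Tr$ and $\nr$, and preserves $G_s = (2R_s + \Z) \cap V$, so it restricts to an action on the finite set of primitive representations of $d_c$ by $Q_s$. The assignment $f \mapsto f(\sqrt{-d_c})$ is then manifestly $R_s^\times$-equivariant, so the lemma upgrades to an $R_s^\times$-equivariant bijection between primitive representations and optimal embeddings.

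Next I would compute $\Stab_{R_s^\times}(\beta)$ for a primitive representation $\beta$. Since $\beta^2 = -d_c \in \Q$ and $\beta \notin \Q$, the element $\beta$ generates the quadratic subfield $\Q(\beta) = f(K_D)$ of $B_{\ell,\infty}$; because $K_D$ is a maximal commutative subfield of the quaternion algebra $B_{\ell,\infty}$, its image $f(K_D)$ is its own centralizer. Hence $\Stab_{R_s^\times}(\beta) = R_s^\times \cap f(K_D)^\times$, and the optimality relation $f(K_D) \cap R_s = f(\cO_{D,c})$ then identifies this stabilizer with $f(\cO_{D,c}^\times)$, which has order $u_{D,c}$. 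The orbit-stabilizer theorem yields orbits of size $w_s/u_{D,c}$, and $R_s^\times$-equivariance of the bijection transports this to the claimed $w_s/u_{D,c}$-to-one correspondence onto $R_s^\times$-conjugacy classes of optimal embeddings.

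The only non-bookkeeping step is the centralizer computation, i.e.\ that $\Q(\beta)$ is self-centralizing in $B_{\ell,\infty}$; this is standard but worth stating explicitly, as it is precisely what converts the optimality hypothesis into a control of the stabilizer. Everything else is a routine transport of structure along the bijection of the preceding lemma.
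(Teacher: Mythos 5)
Your proposal is correct and takes essentially the same route as the paper: the paper deduces the proposition directly from the embedding--representation correspondence together with the optimality--primitivity lemma, leaving the $w_s/u_{D,c}$ count implicit, while you make that count explicit via $R_s^\times$-equivariance, the self-centralizing property of $f(K_D)$ in $B_{\ell,\infty}$, and orbit--stabilizer. Your reading of ``optimal embeddings'' as $R_s^\times$-conjugacy classes is the intended one, as confirmed by the paper's later use of the proposition, where the number of such classes is $r^*(Q_s,d_c)\,u_{D,c}/w_s$.
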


\subsection{The discriminant of $Q_s$}\label{subsec:disc}
For what follows, we will need the discriminant of the quadratic form $Q_s$.
 
\begin{lem}\label{lem:disc}
The discriminant $D_{Q_s}$ of the quadratic form $Q_s$ is equal to $4N^2 \ell^2$.
\end{lem}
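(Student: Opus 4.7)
The plan is a purely local computation: since the discriminant of a ternary quadratic $\Z$-lattice equals the product of its local discriminants, and $Q_s$ is positive definite (because $B_{\ell,\infty}$ is ramified at infinity), it suffices to determine $v_p(D_{Q_s})$ prime by prime. At each prime $p$ the local structure of $R_s$ is standard: for $p\neq\ell$ the order $R_s\otimes\Z_p$ is conjugate in $M_2(\Z_p)$ to the Eichler order of level $p^{v_p(N)}$, namely the set of $\smallmtwo{a}{b}{c}{d}\in M_2(\Z_p)$ with $c\in p^{v_p(N)}\Z_p$; for $p=\ell$ the ring $R_s\otimes\Z_\ell$ is the unique maximal order in the division algebra $B_{\ell,\infty}\otimes\Q_\ell$.

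At a prime $p\neq\ell$, a direct unpacking of the definition shows that an element $2r+n\in 2(R_s\otimes\Z_p)+\Z_p$ has reduced trace zero precisely when $n=-(a+d)$; this gives the explicit $\Z_p$-basis $e_{11}-e_{22},\ 2e_{12},\ 2p^{v_p(N)}e_{21}$ for $G_s\otimes\Z_p$, in which the reduced norm takes the form $-\alpha^2-4p^{v_p(N)}\beta\gamma$ with determinant $4p^{2v_p(N)}$. Hence $v_p(D_{Q_s})=2v_p(N)$ at odd $p\neq\ell$ and $v_2(D_{Q_s})=2+2v_2(N)$ at $p=2$ (assuming $\ell$ is odd).

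At $p=\ell$ I would use the $\Z_\ell$-basis $1,\omega,\pi,\omega\pi$ of the maximal order, with $\omega$ generating the unramified quadratic subextension (so $\omega$ has reduced trace $0$ and $\omega^2=u$ for a nonsquare unit $u\in\Z_\ell^\times$) and $\pi$ a uniformizer satisfying $\pi^2=\ell$ and $\pi\omega=-\omega\pi$. Trace zero forces the $1$-coefficient of any element of $2(R_s\otimes\Z_\ell)+\Z_\ell$ to vanish, so $G_s\otimes\Z_\ell$ has basis $2\omega,\ 2\pi,\ 2\omega\pi$. These vectors are pairwise orthogonal for the bilinear form $\Tr(x\bar y)$, and their reduced norms $-4u,\,-4\ell,\,4u\ell$ yield $v_\ell(D_{Q_s})=2$. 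Multiplying all local contributions and fixing the sign by positive-definiteness produces $D_{Q_s}=4N^2\ell^2$.

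The only real bookkeeping issue is the $2$-adic accounting, since the construction $G_s=(2R_s+\Z)\cap V$ treats the prime $2$ asymmetrically, and the case $\ell=2$ requires the local analysis at $\ell$ to be redone in the maximal order of the ramified algebra $B_{2,\infty}\otimes\Q_2$; none of these modifications change the answer. A cleaner alternative is to invoke the computation of essentially the same ternary form in \cite[\S 1]{gross:heights}, where the discriminant of the reduced-norm lattice attached to an Eichler order of reduced discriminant $d$ is shown to equal $4d^2$; specializing to $d=N\ell$ gives the lemma immediately.
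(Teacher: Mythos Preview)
Your proof is correct and follows essentially the same local, prime-by-prime computation as the paper: at $p\neq\ell$ you both write down the Eichler order in $M_2(\Z_p)$, extract the trace-zero lattice $G_s\otimes\Z_p$, and read off the determinant $4p^{2v_p(N)}$; at $p=\ell$ you both take a standard basis of the maximal order of the division algebra and find the $\ell$-valuation of the discriminant to be $2$. The only difference is that the paper carries out the case $\ell=2$ explicitly (using Gross's description of $G_s\otimes\Z_2$), whereas you defer it to a remark and to the citation of \cite{gross:heights}; since that citation indeed gives the discriminant $4d^2$ for reduced discriminant $d=N\ell$, your shortcut is legitimate.
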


\begin{proof}
Let $p\neq \ell$ be a prime and let $v_p(N)=:n$.  Since $R_s$ is an Eichler order of level $N$, we know that $R_s\otimes \Z_p$ is an Eichler order of level $p^n$ of two-by-two matrices over $\Z_p$.  In particular (up to conjugation) we have
$$
R_s\otimes \Z_p = \left(\begin{array}{cc} \Z_p &\Z_p\\ p^n \Z_p & \Z_p\end{array}\right).
$$
Therefore, 
$$
G_s\otimes \Z_p =\left[  2 \left(\begin{array}{cc} \Z_p &\Z_p\\ p^n \Z_p & \Z_p\end{array}\right)+ \Z_p\right] \cap V = \left\{ \left(\begin{array}{cc} a & 2b\\ 2p^n c& -a\end{array}\right): a,b,c\in \Z_p \right\}.
$$
But then the local quadratic form $Q_{s, p}:=Q_s\otimes \Z_p$ is given by
$$
Q_{s, p}(a,b,c) =\left| \begin{array}{cc} a & 2b\\ 2p^n c& -a\end{array}\right| = -a^2 -4p^n bc
$$
The corresponding matrix for the quadratic form $Q_{s, p}$ is then 
\begin{equation}\label{eqn:Qpadic}
\left(\begin{array}{ccc} -1 &0&0 \\ 0& 0 & -2p^n\\ 0& -2p^n& 0\end{array}\right).
\end{equation}
The determinant of this matrix is $-4p^{2n}$.  Therefore, if $p\neq 2$ then $4$ is a unit and the contribution to the determinant of $Q_s$ is $p^{2n}$, while if $p=2$ the contribution is $4p^{2n}$.

Now consider the case $p=\ell\neq 2$.  In this case we have $R_s\otimes\Z_p$ is the unique maximal order of the unique division algebra, with $\Z_p$-basis $(1,\alpha,\beta,\gamma)$ satisfying $\alpha^2=-p$, $\beta^2=-1$ and $\gamma=\alpha\beta=-\beta\alpha$.  But then $G_s\otimes \Z_p$ has basis $(2\alpha,2\beta,2\gamma)$.  We obtain the quadratic form 
\begin{equation}\label{eqn:Qladic}
Q_{s, p}(2a\alpha+2b\beta+2c\gamma)= 4p a^2 + 4b^2 + 4pc^2,
\end{equation}
which is diagonal with discriminant $64 p^2$, contributing $p^2$ to the discriminant. 

For $p=\ell=2$ we note that since the Eichler order is locally isomorphic to the 
(unique) maximal order, Gross \cite[p. 177]{gross:heights} has shown that for $\alpha^2=\beta^2=\gamma^2=-1$ with $\gamma=\alpha\beta=-\beta\alpha$,
$$
G_s\otimes Z_p = \{ a\alpha +(a+2b)\beta+(a+2c)\gamma: a,b,c\in \Z_p\}.
$$
Thus the $p$-adic quadratic form is given by 
$$
Q_{s, p}(a,b,c) = -(3 a^2 + 4ab + 4ac +4b^2 +4c^2),
$$
with corresponding matrix 
\begin{equation}\label{eqn:Q2adic}
\left(\begin{array}{ccc} 3& 2 &2\\ 2&4& 0\\ 2&0&4\end{array}\right)
\end{equation}
The determinant of this matrix is $16$, and hence contributes $16=4\ell^2$ to the discriminant. 
\end{proof}

\subsection{The theta series associated to $Q_s$} 
Consider the theta series 
$$
\theta_{Q_s} := \sum_{\beta \in G_s} q^{Q_s(\beta)} = \sum_{d \geq 1} a_s(d) q^d. 
$$
Since $-Q_s(\beta) \equiv 0, 1 \mod 4$, we obtain that $a_s(d) \ne 0$ only if $-d$ 
is a discriminant, i.e., $-d \equiv 0, 1 \mod 4$. Thus, 
$$
\theta_{Q_s} = \sum_{\beta \in G_s} q^{Q_s(\beta)} = \sum_{-d \equiv 0, 1 \mod 4} a_s(d) q^d. 
$$
Recall the definition of Kohnen's plus space $M_{3/2}^{+}(\Gamma_0(4M))$ from section \ref{sec:backmodular}.

\begin{lem}\label{lem:thetalevel}
We have $\theta_{Q_s}\in M_{3/2}^{+}(\Gamma_0(4N\ell))$.
\end{lem}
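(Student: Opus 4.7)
The plan is to establish three things: (a) $\theta_{Q_s}$ is a modular form of weight $3/2$; (b) its level divides $4N\ell$ with the canonical character; and (c) it satisfies Kohnen's plus-space condition on Fourier coefficients.

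For (a) and (b), I would invoke the classical theorem of Schoenberg--Shimura: the theta series of a positive-definite integral ternary quadratic form $Q$ is a modular form of weight $3/2$ on $\Gamma_0(4M_Q)$ with a Kronecker character determined by the discriminant of $Q$, where $M_Q$ is the level of $Q$ in the sense of quadratic form theory. The form $Q_s$ is positive-definite because $B_{\ell,\infty}$ is a definite quaternion algebra, and integral because $G_s \subseteq R_s$ consists of elements with integral reduced norm. To pin down $M_{Q_s} \mid N\ell$, I would read off the local levels directly from the local Gram matrices already produced in the proof of Lemma~\ref{lem:disc}: at primes $p \nmid N\ell$ the local order is $M_2(\Z_p)$ and the form is locally unimodular (contributing only the universal factor of $4$ to the global level); at $p \mid N$ with $p \ne \ell$ the matrix in \eqref{eqn:Qpadic} has local level $p^{v_p(N)}$; and at $p = \ell$ the matrices in \eqref{eqn:Qladic} and \eqref{eqn:Q2adic} each have local level $\ell$. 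Assembling these yields $M_{Q_s} \mid N\ell$. Since $D_{Q_s} = 4N^2\ell^2$ is a perfect square, the character attached to $\theta_{Q_s}$ reduces on $(\Z/4N\ell\Z)^\times$ to $\chi_{-4}$, which is the canonical character carried by Kohnen's plus space of weight $3/2$ (compare the character $\psi \cdot \chi_{-4}$ in \eqref{eqn:one-dim}).

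For (c), I would verify that $a_s(d) = 0$ unless $-d \equiv 0, 1 \pmod 4$. This is essentially already shown in the paragraph preceding the lemma: by the congruence \eqref{eqn:disc}, any $\beta \in G_s$ satisfies $Q_s(\beta) \equiv -\gamma^2 \pmod 4$ for some integer $\gamma$, so $Q_s(\beta) \equiv 0, 3 \pmod 4$, which is precisely the Kohnen plus-space condition for $\lambda = 1$.

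The main obstacle is the local-level bookkeeping at $p \in \{2, \ell\}$, where one must carefully translate Gross's nonstandard description of the Eichler order at $p = \ell$ into the Schoenberg--Shimura level convention and keep track of the distinction between the Gram matrix of $Q$ and the symmetric matrix defining it. Once the local contributions are correctly tallied, the global level $4N\ell$ and the character $\chi_{-4}$ drop out directly, and the plus-space condition is then just a restatement of the parity observation already in hand.
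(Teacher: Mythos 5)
Your proposal is correct and follows essentially the same route as the paper: the paper likewise invokes the standard modularity criterion for ternary theta series (citing Duke, p.~39) with the level $4M$ determined by checking that $4p^{v_p(N\ell)}A^{-1}$ is even integral locally, using exactly the matrices \eqref{eqn:Qpadic}, \eqref{eqn:Qladic}, \eqref{eqn:Q2adic} from Lemma~\ref{lem:disc}, while the plus-space coefficient condition is the parity observation recorded just before the lemma. The only point to tidy is your character bookkeeping: because $D_{Q_s}$ is a perfect square the nebentypus of $\theta_{Q_s}$ is trivial in the theta-multiplier normalization implicit in $M^{+}_{3/2}(\Gamma_0(4N\ell))$ (it is $\chi_{-4}$ only relative to the $(cz+d)^{3/2}$ branch convention), and this trivial character is precisely what membership in the plus space as defined in Section~\ref{sec:backmodular} requires.
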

\begin{proof}
Let $A$ be the matrix corresponding to $Q_s$.  It is well known that $\theta_{Q_s}\in M_{3/2}^+(\Gamma_0(4M))$, where $M$ is the minimal positive integer, such that $4M A^{-1}$ has coefficients that are even integers (see \cite[p. 39]{duke:ternary}). Since $A^{-1}$ has rational coefficients, it suffices to check that each coefficient of $4MA^{-1}$ has non-negative $p$-adic valuation for each $p$.  We then explicitly compute the inverse of equations (\ref{eqn:Qpadic}), (\ref{eqn:Qladic}) and (\ref{eqn:Q2adic}) to check that it has even integral coefficients when we multiply by $4p^{v_p(N\ell)}$.
\end{proof}

\subsection{The theta series associated to the genus and the spinor genus of $Q_s$} 
Let $Q$ be a ternary quadratic form. Let $\gen(Q)$ be the genus of $Q$ and let $\spn(Q)$ be the spinor genus of $Q$ (see~\cite[Ch.X]{o'meara} for the definitions). Let $\loc(Q)$ be the set of all integers $n$ that are everywhere locally represented by $Q$. Let 
$r_Q(n)$ (resp. $r^*_{Q}(n)$) be the number of representations (resp. primitive representations) of $n$ by $Q$. Let $w_Q$ be the number of automorphs of $Q$ 
(see~\cite{Jones1} for the definition).  

\vspace{0.1in}

\noindent \emph{1. Theta series associated to $\gen(Q)$.} Let 
\begin{equation}\label{eqn:gen}
r(\gen(Q), n) := \frac{\sum_{Q' \in \gen(Q)} r_{Q'}(n)/w_{Q'}}{\sum_{Q' \in \gen(Q)} 1/w_{Q'}}. 
\end{equation}
Similarly, define 
$$
r^*(\gen(Q), n) := \frac{\sum_{Q' \in \gen(Q)} r_{Q'}^*(n)/w_{Q'}}{\sum_{Q' \in \gen(Q)} 1/w_{Q'}}. 
$$
We define the theta series associated to $\gen(Q)$ as 
$$
\theta_{\gen(Q)} := \sum_{n \geq 1} r(\gen(Q), n) q^n. 
$$

\vspace{0.1in} 

By calculating local densities, Jones \cite[Thm.86]{Jones1} has shown that for $d_c=-Dc^2$
\begin{equation}\label{eqn:jones}
r^{*}(\gen(Q_s), d_c)=C\frac{h(-\Delta d_c)}{u_{\Delta D,c}}.  
\end{equation}
Here, $\Delta$ denotes the discriminant $D_{Q_s}$ of $Q_s$ divided by the square 
of the greatest common divisor of the determinants of all two-by-two minors of the matrix corresponding to $Q_s$, and $C$ only depends on the Legendre symbol $\ds \left(\frac{d_c}{D_{Q_s}}\right)$. One can calculate $\Delta$ $p$-adically using equations 
\eqref{eqn:Qpadic}, \eqref{eqn:Qladic}, and \eqref{eqn:Q2adic} to show that the greatest common divisor of the determinants of all two-by-two minors is precisely $\sqrt{D_{Q_s}}$.  Thus, $\Delta=1$.  

\vspace{0.1in}

\noindent \emph{2. Theta series associated to $\spn(Q)$.} We define the theta series associated to the spinor genus in a similar way. First, let  
\begin{equation}\label{eqn:spn}
r(\spn(Q), n) := \frac{\sum_{Q' \in \spn(Q)} r_{Q'}(n)/w_{Q'}}{\sum_{Q' \in \spn(Q)} 1/w_{Q'}}. 
\end{equation}
Similarly, let 
$$
r^*(\spn(Q), n) := \frac{\sum_{Q' \in \spn(Q)} r_{Q'}^*(n)/w_{Q'}}{\sum_{Q' \in \spn(Q)} 1/w_{Q'}}. 
$$
We also define 
$$
\theta_{\spn(Q)} := \sum_{n \geq 1} r(\spn(Q), n) q^n. 
$$

The theta series $\theta_{\gen(Q)}$ and $\theta_{\spn(Q)}$ are in the same space as $\theta_Q$ (by~\eqref{eqn:gen} and \eqref{eqn:spn} and the fact that $\theta_{Q'}$ are in the same space as $Q$ for all $Q' \in \gen(Q)$; see 
also~\cite[p.366]{hanke:local_densities}).

\subsection{Equidistribution in terms of quadratic forms}

In light of the correspondence obtained in Proposition~\ref{prop:corresp}, the required equidistribution results (Theorem~\ref{thm:main1} and Theorem~\ref{thm:main2}) are  equivalent to showing that 
\begin{equation}\label{eqn:quadmain}
\lim_{\substack{(D,c)\in \Omega_N\\ d_c\to\infty}}\frac{ r^*(Q_s, d_c) u_{D,c}}{2^{\nu(N)}\#\Gamma_{D,c}} = w_s\mu_{\can}(s).
\end{equation}
This result will be equivalent to showing that the limit
\begin{equation}\label{eqn:genlim}
f(s):=\lim_{\substack{(D,c)\in \Omega_N\\ d_c\to\infty}} \frac{ r^*(Q_s, d_c) u_{D,c}}{\# \Gamma_{D,c}}
\end{equation}
exists and is independent of the supersingular point $s$. Here, recall that $d_c := -Dc^2$.  

First, note that $\theta_{Q_s}-\theta_{\spn(Q_s)}$ is a modular form of weight $3/2$ that lies in the orthogonal complement of the space of one-dimensional theta series under the Petersson inner product \cite{schulze-pillot:thetareihen}.  Duke's bound for the Fourier coefficients of such forms \cite{duke:bound}, extending the work of Iwaniec \cite{iwaniec:bound} to forms of weight $3/2$, combined with M\"obius inversion, implies that 
$$
r^{*}(\spn(Q_s),d_c)-r^{*}(Q_s,d_c) = O(d_c^{\frac{13}{28}+\epsilon}).
$$
Siegel's lower bound for the class number \cite{siegel:class_number} (see also \cite[p. 149]{Cox1}) implies that $\#\Gamma_{D,c}\gg d_c^{\frac{1}{2}-\epsilon}$, so  
\begin{equation}\label{eqn:fspn}
\frac{ r^*(Q_s, d_c) u_{D,c}}{\# \Gamma_{D,c}} = \frac{ r^*(\spn(Q_s), d_c) u_{D,c}}{\# \Gamma_{D,c}} +  O(d_c^{-\frac{1}{28}+\epsilon})
\end{equation}

Thus, we only need to show independence and convergence of the limit for each spinor genus.  Since $r^*(\gen(Q_s),n)$ is independent of $s$ by definition, it will be natural to compare $r^*(\gen(Q_s),n)$ with $r^*(\spn(Q_s),n)$ in order to determine the desired independence.  

In particular, we have the following.
\begin{lem}\label{lem:rstargen}
The limit
$$
\lim_{k\to\infty} \frac{r^{*}(\gen(Q_s), -Dp^{2k})u_{D,p^k}}{\# \Gamma_{D,p^k}}
$$
exists and is independent of $p$ and $s$. 
\end{lem}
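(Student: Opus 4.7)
By Lemma \ref{lem:disc} we have $D_{Q_s} = 4N^2\ell^2$, and it was shown just after \eqref{eqn:jones} that $\Delta = 1$. Specializing Jones' formula \eqref{eqn:jones} therefore gives
\[
r^*(\gen(Q_s), d_c)\, u_{D,c} \;=\; C \cdot h(-d_c),
\]
where $C$ depends only on the Legendre symbol $\left(\frac{d_c}{D_{Q_s}}\right)$. Combining this with the class field theoretic identification $\#\Gamma_{D, p^k} = \#\Pic(\cO_{D, p^k}) = h(-D p^{2k})$, the quantity of interest collapses to
\[
\frac{r^*(\gen(Q_s), -Dp^{2k})\, u_{D, p^k}}{\#\Gamma_{D, p^k}} \;=\; C,
\]
so the lemma reduces to showing that $C$ is (eventually) independent of $s$, $p$, and $k$.

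First, I would address independence in $s$: any two Eichler orders of level $N$ in $B_{\ell, \infty}$ are locally isomorphic at every finite place, so the associated ternary forms $Q_s$ all lie in a single genus. Thus $r^*(\gen(Q_s), n)$ is independent of $s$ for every $n$, and hence so is $C$.

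Next, I would study the dependence of $C$ on $d_c = -Dp^{2k}$. For $p$ coprime to $2N\ell$, the factor $p^{2k}$ is a perfect square coprime to $D_{Q_s}$, so $\left(\frac{-Dp^{2k}}{D_{Q_s}}\right) = \left(\frac{-D}{D_{Q_s}}\right)$ is independent of both $p$ and $k$, making $C$ literally constant in those variables. In this case the limit exists and equals this common value.

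The main obstacle is the case of the exceptional primes $p \in \{2, \ell\}$, where $p$ divides $D_{Q_s}$ and $v_p(d_c) = 2k$ grows with $k$, so the naive Legendre symbol no longer controls $C$. For these, I would return to the local-density proof of Jones' formula and analyze the local factor at $p$ directly using the explicit $p$-adic descriptions \eqref{eqn:Qladic} and \eqref{eqn:Q2adic}. A Hensel-type argument shows that the local density $\beta_p(-Dp^{2k})$ stabilizes once $k$ is sufficiently large, and an explicit computation verifies that the stable value coincides with the constant obtained for $p \nmid 2N\ell$, thereby establishing both existence of the limit and its $p$-independence.
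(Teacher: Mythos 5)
Your proposal is correct for the range of $p$ that actually matters and follows essentially the same route as the paper: both arguments rest on Jones' formula \eqref{eqn:jones} with $\Delta=1$ together with the identification $\#\Gamma_{D,p^k}=\#\Pic(\cO_{D,p^k})=h(-Dp^{2k})$ from Lemma \ref{lem:galorbitsheeg}; you simply cancel $h(-Dp^{2k})$ so that the ratio is literally the Jones constant $C$, while the paper reaches the same constancy by routing both numerator and denominator through the class number formula \eqref{eqn:class_formula}, and your remarks that $\bigl(\frac{-Dp^{2k}}{D_{Q_s}}\bigr)=\bigl(\frac{-D}{D_{Q_s}}\bigr)$ for $p\nmid 2N\ell$ and that all $Q_s$ lie in a single genus make explicit what the paper leaves implicit in its constant $c_D$ and its one-line claim of $s$-independence. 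One caveat: your closing paragraph on $p\in\{2,\ell\}$ addresses a case outside the lemma's intended scope (the lemma is only invoked with $p\nmid N\ell$, as in Lemma \ref{lem:vatsal}), and for $p=\ell$ the asserted coincidence with the generic constant would actually be false, since an order of conductor divisible by $\ell$ admits no optimal embeddings locally at the ramified prime, so $r^{*}(\gen(Q_s),-D\ell^{2k})=0$ for $k\geq 1$ and the corresponding limit would be $0$ rather than the common positive value.
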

\begin{proof}
The independence on $s$ is clear from the definition of $\gen(Q_s)$.  We will apply \eqref{eqn:jones} to $n = -Dp^{2k}$.
First, recall (see \cite[Cor.7.28, p.148]{Cox1}) that for any discriminant $D_0 < 0$ and any prime $p$ we have
\begin{equation}\label{eqn:class_formula}
h(-D_0 p^{2k})= C p^k\left ( 1-\frac{1}{p} \left(\frac{-D_0}{p}\right)\right )\cdot \frac{u_{D_0,p^k}}{u_{D_0,1}} h(-D_0)
\end{equation}
Equation (\ref{eqn:class_formula}) with $D_0=D$ allows us to express $r^*(\gen(Q_s), -Dp^{2k})$ and $\#\Gamma_{D,p^k}$ as 
\begin{equation}\label{eqn:rstargen}
r^{*}(\gen(Q_s), -Dp^{2k}) = \frac{c_D h(-D p^{2k})}{u_{D, p^{k}}} = 
c_d Cp^k\left (1-\frac{1}{p}\left(\frac{-D}{p}\right)\right ) \frac{h(-D)}{u_{D,1}},
\end{equation}
\begin{equation}\label{eqn:gammasize}
\frac{\# \Gamma_{D, p^k}}{u_{D,p^k}} = p^k\left (1-\frac{1}{p}\left(\frac{-D}{p}\right)\right ) \frac{\# \Gamma_{D, 1}}{u_{D,1}}.
\end{equation}
Hence, for $k \geq 1$ we obtain
\begin{equation}\label{eqn:fprat}
\frac{r^{*}(\gen(Q_s), -Dp^{2k})u_{D,p^k}}{\# \Gamma_{D, p^k}} = c_D C \frac{h(-D)}{\# \Gamma_{D, 1}}.
\end{equation}
The result follows since the right-hand side of (\ref{eqn:fprat}) is independent of $k$ and $p$.
\end{proof}
We now define the restricted limit
\begin{equation}\label{eqn:primelim}
f_{D,p}(s):=\lim_{k\to\infty} \frac{ r^*(Q_s, -Dp^{2k}) u_{D,p^k}}{\# \Gamma_{D,p^k}}= \lim_{k\to\infty} \frac{ r^*(\spn(Q_s), -Dp^{2k}) u_{D,p^k}}{\# \Gamma_{D,p^k}}.
\end{equation}

The equidistribution result of Vatsal \cite[Thm.1.5]{vatsal:uniform} combined with Proposition \ref{prop:corresp} states the following:
\begin{lem}\label{lem:vatsal}[Vatsal]
For every $s\in X_0(N)^{\SSS}_{/\F_{\ell^2}}$, fundamental discriminant $D < 0$ and $p\nmid N\ell$ we have 
\begin{equation}\label{eqn:vatsal}
f_{D,p}(s)= w_s\mu_{\can}(s).
\end{equation}
\end{lem}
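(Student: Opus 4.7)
The plan is to translate the ratio defining $f_{D,p}(s)$ into the supersingular reduction probability $\mu_{D,p^k}(s)$ and then quote Vatsal's anticyclotomic equidistribution theorem. All of the analytic content is imported from \cite{vatsal:uniform}; what remains here is just a dictionary translation using the tools already set up in Sections~\ref{sec:deuring} and \ref{subsec:optimal-primrep}.

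First I would combine Proposition~\ref{prop:corresp} with the Corollary of Section~\ref{sec:deuring}. The proposition yields
$$r^*(Q_s,-Dp^{2k}) \;=\; \frac{w_s}{u_{D,p^k}}\, h(\cO_{D,p^k},R_s),$$
and the Corollary identifies $h(\cO_{D,p^k},R_s)$ with a fixed integer multiple of the number of points of the Galois orbit $\Gamma_{D,p^k}$ that reduce to $s$. Substituting, the ratio inside the limit in \eqref{eqn:primelim} becomes exactly $w_s\mu_{D,p^k}(s)$, which is already the content of the formula for $\mu_c(s)$ displayed just after that Corollary.

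It then suffices to show $\mu_{D,p^k}(s)\to\mu_{\can}(s)$ as $k\to\infty$, and this is precisely Vatsal's theorem \cite[Thm.~1.5]{vatsal:uniform}. Its hypotheses are met in our setting: $p\nmid N$ ensures that the conductors $p^k$ remain coprime to $N$ so that $\Gamma_{D,p^k}$ is a well-defined Heegner orbit on $X_0(N)$; $p\ne\ell$ ensures that $\red_\ell$ makes sense along the entire anticyclotomic tower; and $\ell$ inert in $K_D$ guarantees that the image of $\red_\ell$ lies in the supersingular locus. Vatsal's theorem therefore delivers the weak-$*$ convergence of $\mu_{D,p^k}$ to $\mu_{\can}$, hence the pointwise statement, yielding $f_{D,p}(s)=w_s\mu_{\can}(s)$.

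There is no genuine obstacle at this step; the lemma is simply Vatsal's result expressed in the quadratic-forms language. The real work of Section~\ref{sec:proof} lies in what follows: bootstrapping this $p$-adic equidistribution, which is valid only for fixed $D$ and $c$ a power of a single auxiliary prime $p\nmid N\ell$, to the full equidistribution theorem in which $D$ and $c$ are both allowed to grow. That step will rely on the genus/spinor-genus comparison and Duke's bound for Fourier coefficients of half-integral weight forms in the orthogonal complement of the one-dimensional theta series, as already previewed in \eqref{eqn:fspn}.
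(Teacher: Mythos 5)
Your proposal is correct and follows essentially the same route as the paper, which likewise obtains the lemma by translating $r^*(Q_s,-Dp^{2k})$ into counts of optimal embeddings via Proposition~\ref{prop:corresp} and the corollary of Section~\ref{sec:deuring}, and then quoting Vatsal's anticyclotomic equidistribution theorem \cite[Thm.~1.5]{vatsal:uniform}; your explicit verification of Vatsal's hypotheses ($p\nmid N$, $p\neq\ell$, $\ell$ inert) is a modest improvement on the paper's one-line citation. The only point of caution is the bookkeeping of the factor $2^{\nu(N)}$ relating $h(\cO_{D,p^k},R_s)$ to the single-orbit count, which you (like the displayed formula for $\mu_c(s)$ in the paper) absorb implicitly.
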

We will now use equation (\ref{eqn:vatsal}) to rewrite $r^*(\spn(Q_s),n)$ in terms of $r^*(\gen(Q_s),n)$ and then use equation (\ref{eqn:fspn}) to show that $f(s)$ exists and is independent of $s$.

\begin{prop}\label{prop:gen-spn}
Let $s \in X_0(N)^{\SSS}_{/\F_{\ell^2}}$ and $n = -Dc^2$, where $D <0$ is a fundamental discriminant.  Assume that $(c, N\ell) = 1$. Then 
$$
r^*(\gen(Q_s), n) = r^*(\spn(Q_s), n).
$$
\end{prop}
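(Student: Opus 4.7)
My plan is to combine Schulze-Pillot's structural theorem with a M\"obius inversion at the level of primitive Fourier coefficients, and then to exploit both that $D$ is fundamental and that $(c,N\ell)=1$. By \cite{schulze-pillot:thetareihen}, the difference $\theta_{\gen(Q_s)} - \theta_{\spn(Q_s)}$ lies in the subspace of $M_{3/2}^+(\Gamma_0(4N\ell))$ spanned by one-dimensional theta series, so I may write
$$\theta_{\gen(Q_s)} - \theta_{\spn(Q_s)} = \sum_{j} \alpha_j\, h_{\psi_j,\,t_j},$$
a finite combination with each $t_j$ squarefree and each $\psi_j$ odd. Since $h_{\psi_j, t_j} \in S_{3/2}(\Gamma_0(4 r(\psi_j)^2), \psi_j\chi_{-4})$ by \eqref{eqn:one-dim} and $\theta_{Q_s}\in M_{3/2}^+(\Gamma_0(4N\ell))$ by Lemma \ref{lem:thetalevel}, matching levels and nebentypi constrains the admissible pairs $(\psi_j, t_j)$: in particular, $r(\psi_j)^2$ divides $N\ell$ and the squarefree integer $t_j$ is built only from primes dividing $2N\ell$.

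Applying M\"obius inversion to $r_{Q}(n) = \sum_{k^2\mid n} r_Q^*(n/k^2)$ and comparing Fourier coefficients yields
$$r^*(\gen(Q_s), n) - r^*(\spn(Q_s), n) = \sum_{j} \alpha_j\, a^*_{\psi_j,\,t_j}(n),$$
where $a^*_{\psi,t}(n)$ denotes the primitive Fourier coefficient of $h_{\psi,t}$ at $q^n$. A direct computation using that $t$ is squarefree shows that $a^*_{\psi,t}(n)$ vanishes unless $n = t s^2$ for some positive integer $s$, in which case
$$a^*_{\psi,t}(t s^2) = \sum_{k\mid s}\mu(k)\,\psi(s/k)\,(s/k) = (\mu * f)(s), \qquad f(m):= m\,\psi(m).$$

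Specializing to $n=-Dc^2$ with $D$ fundamental and $(c,N\ell)=1$: because $D$ is fundamental and $c$ is coprime to every admissible $t_j$, the only way $-Dc^2$ can have the form $t_j s^2$ is for $t_j$ to coincide with the squarefree part of $-D$, up to a harmless factor of $2$ when $D\equiv 0\pmod{4}$. Whenever the squarefree part of $-D$ contains a prime outside $2N\ell$ (the generic case), every $a^*_{\psi_j,t_j}(-Dc^2)$ vanishes and the equality is immediate. For the exceptional finite family of $D$ whose squarefree part divides $2N\ell$, I factor $s=cj$ with $j\in\{1,2\}$ coprime to $c$ and use multiplicativity of $\mu * f$ together with $(c,r(\psi_j))=1$ to isolate the contribution coming from primes $p\mid c$. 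By the refined form of Schulze-Pillot's theorem, the coefficients $\alpha_j$ are determined by local spinor-norm data at primes of $2N\ell$; at primes $p\mid c$ the form $Q_s$ is locally smooth and genus and spinor genus coincide locally, which forces the $p$-adic contributions across the linear combination to cancel.

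The main obstacle is this final cancellation in the exceptional case: it requires identifying the $\alpha_j$ and $\psi_j$ explicitly from Schulze-Pillot's proof, using the $p$-adic models in \eqref{eqn:Qpadic}--\eqref{eqn:Q2adic}, and exhibiting the cancellation prime by prime.
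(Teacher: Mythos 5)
Your reduction to the case that the squarefree part of $-D$ is supported on $2N\ell$ is correct and is essentially the same reduction the paper makes (Lemma~\ref{lem:dfund}): when that squarefree part has a prime factor outside $2N\ell$, no admissible square class $t\Z^2$ can contain $-Dc^2$, so all coefficients of $\theta_{\gen(Q_s)}-\theta_{\spn(Q_s)}$ in question vanish. But the exceptional case you defer is not a technical loose end; it is the entire content of the proposition, and the mechanism you sketch for it cannot work. Within an exceptional square class the coefficients of a combination of one-dimensional theta series do not admit any cancellation coming from primes $p \mid c$ with $p \nmid N\ell$: writing $a_m$ for the $m$-th coefficient of $\theta_{\gen(Q_s)}-\theta_{\spn(Q_s)}$, one has (this is the paper's Lemma~\ref{lem:coeff}, and it follows directly from your own formula $a^*_{\psi,t}(ts^2)=\sum_{k\mid s}\mu(k)\psi(s/k)(s/k)$ together with the nebentypus constraint $\psi=\chi_t$ on $(m,N\ell)=1$) the exact identity $a_{-Dc^2}= c\left(\frac{DD_{Q_s}}{c}\right)a_{-D}$ for $(c,N\ell)=1$. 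So the contribution at primes dividing $c$ simply multiplies $a_{-D}$ by $c$ times a character value; the fact that genus and spinor genus agree locally at such $p$ is automatic and produces no cancellation. What actually has to be proved is that $a_{-D}=0$ for $D$ fundamental, i.e.\ that a fundamental discriminant is never a (primitively) spinor exceptional value for $Q_s$, and your proposal contains no mechanism for this; ``identifying the $\alpha_j$ explicitly from Schulze-Pillot's proof'' will not produce a prime-by-prime cancellation that is not there.

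The paper closes this gap with an input of a completely different nature: Vatsal's equidistribution theorem for fixed fundamental discriminant and $p$-power conductor (Lemma~\ref{lem:vatsal}), which guarantees that the limit $f_{-D,p}(s)$ of the normalized primitive representation numbers along $c=p^k$ exists. If $a_{-D}\neq 0$, one chooses $p\nmid N\ell$ with $\left(\frac{DD_{Q_s}}{p}\right)=-1$; then by the identity above and the class number formula \eqref{eqn:gammasize}, the spinor-genus term contributes $(-1)^k$ times a nonzero constant, while the genus term converges (Lemma~\ref{lem:rstargen}), so the limit cannot exist --- a contradiction. Some such external input (or a genuine classification of spinor exceptional integers for these Eichler-order forms ruling out fundamental discriminants) is needed; without it your argument proves the proposition only in the generic case.
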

Let $a_m := r(\gen(Q_s), m) - r(\spn(Q_s), m)$. According to a result of Schulze-Pillot \cite{schulze-pillot:thetareihen} as well as Flicker \cite{flicker:automorphic}, Niwa \cite{niwa:theta}, Cipra and others \cite{cipra:niwa-shintani}), $\theta_{\gen(Q_s)} -\theta_{\spn(Q_s)}$ belongs to the subspace of cuspidal forms of weight 3/2 spanned by the one-dimensional theta series (see also~\cite{hanke:survey}).  Note that the Fourier coefficients of the one-dimensional theta series $h_{\psi, t}(z)$ defined in equation (\ref{eqn:one-dim}) vanish outside the square class $t\Z^2$.  Let 
\begin{equation}\label{eqn:lincomb}
\theta_{\gen(Q_s)} - \theta_{\spn(Q_s)} = \sum_{\psi, t} c_{\psi, t} h_{\psi, t}. 
\end{equation}

Let $h_{\psi, t}$ be one of the one-dimensional theta series in \eqref{eqn:lincomb} and let $4M=4N\ell$ be the level of $\theta_{\gen(Q_s)} - \theta_{\spn(Q_s)}$. The transformation law for modular forms of level $4M$ with Nebentypus $\chi$ implies that $\psi(mn)=\psi(n) \chi_t(m)$ for every $(m, M)=1$, where $\ds \chi_t(m)= \chi(m) \left(\frac{-t}{m}\right)$ (see~\cite[p.285]{schulze-pillot:thetareihen}). In addition, if $h_{\psi,t}\neq 0$ then $4t \mid M$ (see, e.g., \cite[Kor.2]{schulze-pillot:thetareihen}). 

\begin{lem}\label{lem:dfund}
Let $-d > 0$ be the smallest positive integer satisfying the following two conditions:
\begin{enumerate}
\item If $d = Dc^2$, where $D<0$ is a fundamental discriminant then $c$ is prime to $N\ell$;

\item $a_{-d} \ne 0$.    
\end{enumerate}
Then $c = 1$ and $d = D$ is a fundamental discriminant.
\end{lem}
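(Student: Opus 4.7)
The plan is a contradiction argument: suppose that the conductor $c$ in the decomposition $-d = Dc^2$ satisfies $c > 1$, and exhibit a strictly smaller positive integer $-d'$ also satisfying hypotheses~(1) and~(2); the minimality of $-d$ will then force $a_{-d'} = 0$, and a character-theoretic identity derived from \eqref{eqn:lincomb} will propagate this vanishing to $a_{-d}$, contradicting~(2).

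First I would pick a prime $p \mid c$. By condition~(1), $p \nmid N\ell$, so $-d' := -d/p^2 = D(c/p)^2$ is a strictly smaller positive integer whose fundamental-discriminant part is still $D$ and whose conductor $c/p$ remains coprime to $N\ell$. Hence $-d'$ satisfies~(1), and minimality gives $a_{-d'} = 0$. Now each $h_{\psi,t}$ is supported on integers of the form $tm^2$ with $t$ squarefree, so the only value of $t$ contributing to either of $a_{-d}$ or $a_{-d'}$ is $t_0$, the squarefree part of $-d$ (which equals the squarefree part of $-d'$). Setting $m := \sqrt{-d/t_0}$, so that $-d = t_0 m^2$ and $-d' = t_0(m/p)^2$ with $p \mid m$, the two coefficients reduce to
\[
a_{-d} \;=\; \sum_{\psi} c_{\psi, t_0}\,\psi(m)\,m, \qquad a_{-d'} \;=\; \sum_{\psi} c_{\psi, t_0}\,\psi(m/p)\,(m/p).
\]

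The key step is to pull a factor of $p$ out of the first sum uniformly in $\psi$. The transformation law $\psi(mn) = \psi(n)\,\chi_{t_0}(m)$ for $\gcd(m,M) = 1$, specialized to $n = 1$ and $m = p$ (permissible because $p \nmid M = N\ell$), yields $\psi(p) = \chi_{t_0}(p)$, a scalar independent of $\psi$; moreover, $4t_0 \mid N\ell$ together with $p \nmid N\ell$ force $\gcd(p, t_0) = 1$, so $\chi_{t_0}(p) \neq 0$. Multiplicativity of $\psi$ then gives $\psi(m) = \chi_{t_0}(p)\,\psi(m/p)$ for every $\psi$ appearing in the sum, whence $a_{-d} = p\,\chi_{t_0}(p)\,a_{-d'} = 0$, contradicting~(2). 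Therefore $c = 1$, and $d = D$ is a fundamental discriminant. The main subtlety is precisely this uniform, nonzero factorization of $\chi_{t_0}(p)$: uniformity in $\psi$ is automatic because only a single squarefree $t_0$ contributes to any given Fourier coefficient of a sum of one-dimensional theta series, while nonvanishing rests on the coprimality $(c, N\ell) = 1$ built into hypothesis~(1) together with the divisibility $4t \mid N\ell$ imposed on the support of the $h_{\psi,t}$.
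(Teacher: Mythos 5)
Your proposal is correct in its core mechanism but follows a genuinely different route from the paper. You argue by descent: assuming $c>1$, you pass to $-d/p^2$ for a prime $p\mid c$ (coprime to $N\ell$ by hypothesis (1)), invoke minimality to get $a_{-d/p^2}=0$, and use the uniform identity $\psi(p)=\chi_{t_0}(p)$, valid because $(p,M)=1$, to obtain $a_{-d}=p\,\chi_{t_0}(p)\,a_{-d/p^2}=0$, contradicting (2). The paper instead works directly with the expansion $a_{-d}=\sum_{-d=tm^2}\chi_t(m)\,m\sum_\psi c_{\psi,t}$: it picks the minimal $t$ with $\sum_\psi c_{\psi,t}\neq 0$, shows $a_t\neq 0$, concludes $t=-d$ by the minimality of $-d$, and then uses the support constraint $4t\mid M$ to force the conductor of $d$ to divide $N\ell$, hence to equal $1$. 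Your one-prime scaling relation is essentially a special case of the paper's Lemma~\ref{lem:coeff}, so your route proves that mechanism on the spot, whereas the paper's route isolates the quantity $\sum_\psi c_{\psi,-d}$, which it reuses afterwards. Note also that the nonvanishing of $\chi_{t_0}(p)$, which you take pains to justify, is not actually needed: since $a_{-d/p^2}=0$, the identity $a_{-d}=p\,\chi_{t_0}(p)\,a_{-d/p^2}$ gives the contradiction whether or not that factor vanishes.

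One step should be tightened: you assume that every $h_{\psi,t}$ occurring in \eqref{eqn:lincomb} has $t$ squarefree, so that only the squarefree part $t_0$ of $-d$ contributes to $a_{-d}$ and $a_{-d/p^2}$. The paper does not impose this (its own proofs of this lemma and of Lemma~\ref{lem:coeff} explicitly allow $t=t'm^2$), and a priori a term $h_{\psi,t_0 j^2}$ with $j>1$ could contribute, so your displayed formulas for the two coefficients may be missing terms. The repair uses exactly the divisibility you already invoke: any contributing $t=t_0 j^2$ satisfies $4t\mid M=N\ell$, hence $j\mid N\ell$, while $-d=t_0m^2$ with $m\in\{c,2c\}$ and $(c,N\ell)=1$; therefore $p\nmid j$ and $p\nmid t$, the same set of $j$'s contributes to $a_{-d}$ and to $a_{-d/p^2}$, and $\psi(p)=\chi_t(p)=\chi_{t_0}(p)$ uniformly over all contributing pairs $(\psi,t)$. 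The relation $a_{-d}=p\,\chi_{t_0}(p)\,a_{-d/p^2}$ therefore survives, and with this adjustment your argument is complete.
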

\begin{proof}
It follows from \eqref{eqn:lincomb} and Lemma~\ref{lem:thetalevel} (since $(m,M)=1$) that  $\psi(m)=\chi_t(m)$. Hence, 
$$
a_{-d} = \sum_{-d = t m^2, \psi} c_{\psi, t} \psi(m)m = \sum_{-d = t m^2}\chi_t(m) m  \sum_{\psi}c_{\psi, t} \ne 0. 
$$
Hence there exists $t$ satisfying $\ds \sum_{\psi} c_{\psi, t} \ne 0$. Choose the minimal $t$ with this property and observe that 
$\ds a_t = \sum_{t', t=t'm^2} \sum_{\psi} c_{\psi, t'} \psi(m) m = \sum_{\psi} c_{\psi, t} \ne 0$. Hence, $t = -d$ and 
$$
a_{-d} = \sum_{-d = tm^2, \psi} c_{\psi, t} h_{\psi, t} = \sum_{\psi} c_{\psi, -d} h_{\psi,-d} \ne 0. 
$$
Now, if the conductor $c$ of $d$ were not equal to 1, it would have divided $M$ and hence would not have been prime to $N\ell$. Thus, the only possibility is that $c = 1$ and 
$d = D$ is a fundamental discriminant. 
\end{proof}

\begin{lem}\label{lem:coeff}
Let $D < 0$ be the fundamental discriminant from Lemma \ref{lem:dfund} and 
$(c, N\ell) = 1$. If $D_{Q_s}$ is the discriminant of the quadratic form $Q_s$ then 
$$
a_{-Dc^2} = c \left ( \frac{DD_{Q_s}}{c}\right ) a_{-D}. 
$$
In particular, for $c=p^k$ we have
$$
a_{-Dp^{2k}} = p^{k} \left ( \frac{-DD_{Q_s}}{p}\right )^k a_{-D}.
$$
\end{lem}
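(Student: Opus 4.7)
The plan is to expand $a_{-Dc^2}$ via the decomposition \eqref{eqn:lincomb} and reduce it to a single surviving term. Since $h_{\psi,t}$ has nonzero coefficient of $q^n$ only when $n = tm^2$ for some positive integer $m$ (and then the value is $\psi(m)\,m$), unfolding gives
\[
a_{-Dc^2} \;=\; \sum_{\substack{\psi,\,t,\,m \\ tm^2 = -Dc^2}} c_{\psi,t}\,\psi(m)\,m.
\]
The constraint $4t \mid 4N\ell$ forces $t \mid N\ell$, and combined with $(c,N\ell) = 1$ this yields $(c,t) = 1$, hence $c \mid m$. Writing $m = cm'$ reduces the equation to $tm'^2 = -D$, which no longer involves $c$.

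Because $D$ is a fundamental discriminant (Lemma \ref{lem:dfund}), the positive-integer solutions $(t,m')$ of $tm'^2 = -D$ are very restricted: $(-D,1)$ in all cases, together with $(|D_0|,2)$ when $D = 4D_0$ with $D_0$ squarefree and $D_0 \equiv 2$ or $3 \pmod 4$. The extra pair in the $D \equiv 0 \pmod 4$ case needs to be ruled out. Since $|D_0|$ is squarefree, only $(t,m) = (|D_0|,1)$ contributes to $a_{|D_0|}$, so $a_{|D_0|} = \sum_\psi c_{\psi,|D_0|}$. But $|D_0| \equiv 1$ or $2 \pmod 4$ lies outside the Kohnen plus space support $\{0,3\}\pmod 4$ in weight $3/2$ (Lemma \ref{lem:thetalevel}), forcing $a_{|D_0|} = 0$ and hence $\sum_\psi c_{\psi,|D_0|} = 0$. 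Since every $\psi$ pairing with a fixed $t$ agrees with $\chi_t$ on $(\Z/M)^\times$, this cancellation propagates and the full $(|D_0|,2)$-block contribution to $a_{-Dc^2}$ also vanishes. Hence only $(t,m') = (-D,1)$ survives, giving $a_{-Dc^2} = c \sum_\psi c_{\psi,-D}\,\psi(c)$.

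Applying the transformation law $\psi(c) = \chi_{-D}(c) = \chi(c)\bigl(\tfrac{D}{c}\bigr)$ for $(c,M)=1$, where $\chi$ is the Nebentypus of $\theta_{\gen(Q_s)} - \theta_{\spn(Q_s)}$ (independent of $\psi$ on the part coprime to $M$), and then setting $c = 1$ to read off $a_{-D} = \sum_\psi c_{\psi,-D}$, I obtain
\[
a_{-Dc^2} \;=\; c\,\chi(c)\,\Bigl(\tfrac{D}{c}\Bigr)\,a_{-D}.
\]
The standard transformation law for the theta series of a ternary quadratic form of discriminant $D_{Q_s}$ identifies $\chi(c) = \bigl(\tfrac{D_{Q_s}}{c}\bigr)$ on $(\Z/M)^\times$. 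Combining these yields the main formula $a_{-Dc^2} = c\bigl(\tfrac{DD_{Q_s}}{c}\bigr)\,a_{-D}$, and the specialization to $c = p^k$ follows from multiplicativity of the Kronecker symbol in the denominator.

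The most delicate step is the cancellation argument for the $(|D_0|,2)$-block in the $D \equiv 0 \pmod 4$ case: rigorously verifying that $\sum_\psi c_{\psi,|D_0|} = 0$ forces the full block contribution to $a_{-Dc^2}$ to vanish requires careful tracking of how the characters $\psi$ are determined by $t$ on the part coprime to the level. A secondary, more routine issue is pinning down precisely the Nebentypus character of the ternary theta series of $Q_s$.
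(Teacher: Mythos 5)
Your proof follows essentially the same route as the paper's: expand $a_{-Dc^2}$ through the decomposition \eqref{eqn:lincomb}, use the level constraint on $t$ together with $(c,N\ell)=1$ to confine the contribution to the square class of $-D$, evaluate $\psi(c)=\chi_{-D}(c)$ away from the level, and identify the Nebentypus with the quadratic character attached to $D_{Q_s}$. In fact you are more careful than the paper, whose proof only considers $t=-D(c')^2$ and silently ignores the extra solutions $t=|D_0|$, $m$ even, that arise when $D=4D_0\equiv 0\pmod 4$. The one soft spot is exactly the step you flagged: deducing $\sum_\psi c_{\psi,|D_0|}\psi(2c)=0$ from $\sum_\psi c_{\psi,|D_0|}=0$ needs $\psi(2)=\chi_{|D_0|}(2)$, which the relation $\psi=\chi_t$ on integers prime to $N\ell$ only gives when $2\nmid N\ell$; if $2\mid N\ell$ the value $\psi(2)$ is not pinned down and the $(|D_0|)$-block need not vanish. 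But in that case $c$ is forced to be odd, and then $\chi_{-D}(c)=\chi(c)\left(\frac{D}{c}\right)=\chi(c)\left(\frac{D_0}{c}\right)=\chi_{|D_0|}(c)$, so both square-class blocks pick up the same factor $c\,\chi_{-D}(c)$ as $c$ varies; comparing with their combined contribution to $a_{-D}$ at $c=1$ gives $a_{-Dc^2}=c\,\chi_{-D}(c)\,a_{-D}$ directly, with no need to make the block vanish (only your intermediate identity $a_{-D}=\sum_\psi c_{\psi,-D}$ has to be replaced by the combined sum). With that small patch your argument is complete and, if anything, covers a case the published proof glosses over.
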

\begin{proof}
Note that $\ds a_{-Dc^2} = \sum_{-Dc^2=tm^2, \psi} c_{\psi, t} \psi(m) m$. We know that if $t = -D(c')^2$ for some $c' > 1$ then $h_{t, \psi} = 0$ 
(since $(c', M) = 1$). Hence, 
$$
a_{-Dc^2} = \sum_{\psi} c_{\psi, D} \psi(c) c = c \chi_{D}(c) \sum_{\psi} c_{\psi, D} = c \left ( \frac{DD_{Q_s}}{c} \right ) a_{-D}. 
$$
\end{proof}

\begin{proof}[Proof of Proposition \ref{prop:gen-spn}]
We will prove the statement by contradiction. Assume the contrary and let $n$ be the smallest integer whose square part is prime to $N\ell$ and such that 
$r(\gen(Q_s), n) \ne r(\spn(Q_s), n)$.  Lemma \ref{lem:dfund} implies that if $-n = Dc^2$ 
for a fundamental discriminant $D$ and a conductor $c$ then $c = 1$ and $n = -D$.  Let $p\nmid N\ell$ be a prime for which $\left(\frac{DD_{Q_s}}{p}\right)=-1$.  We will show that under these assumptions the limit $f_{-D,p}(s)$ does not exist, contradicting Lemma \ref{lem:vatsal}.  Using Lemma \ref{lem:coeff} and equation (\ref{eqn:gammasize}), we have 
\begin{eqnarray*}
f_{-D,p}(s) &= &\lim_{k\to\infty}\left(  \frac{\left( r^*(\spn(Q_s),-Dp^{2k})-r^*(\gen(Q_s), -Dp^{2k})\right) u_{D,p^k}}{\# \Gamma_{D,p^k}}+ \frac{r^*(\gen(Q_s), -Dp^{2k}) u_{D,p^k}}{\# \Gamma_{D,p^k}}\right)\\
&=& \lim_{k\to\infty} \left( - \frac{a_{-Dp^{2k}}}{\# \Gamma_{D,p^k}/u_{D,p^k}} +  \frac{r^*(\gen(Q_s), -Dp^{2k}) u_{D,p^k}}{\# \Gamma_{D,p^k}}\right)\\
&=& \lim_{k\to\infty}\left( - \frac{a_{-D} p^k \left(\frac{DD_{Q_s}}{p}\right)^k}{p^k\left(1-\frac{1}{p}\left(\frac{D}{p}\right)\right) \# \Gamma_{D,1}/u_{D,1}} + \frac{r^*(\gen(Q_s), -Dp^{2k}) u_{D,p^k}}{\# \Gamma_{D,p^k}}\right)\\
&=& \lim_{k\to\infty} \left( - \frac{a_{-D}  (-1)^k}{\left(1-\frac{1}{p}\left(\frac{D}{p}\right)\right) \# \Gamma_{D,1}/u_{D,1}} + \frac{r^*(\gen(Q_s), -Dp^{2k}) u_{D,p^k}}{\# \Gamma_{D,p^k}}\right).
\end{eqnarray*}
However, the limit 
$$
\lim_{k\to\infty} \frac{r^*(\gen(Q_s), -Dp^{2k}) u_{D,p^k}}{\# \Gamma_{D,p^k}}
$$
exists by Lemma \ref{lem:rstargen}.  Therefore, if the limit $f_{-D,p}(s)$ exists, then the limit 
$$
\lim_{k\to\infty} - \frac{a_{-D} (-1)^k}{\left(1-\frac{1}{p}\left(\frac{D}{p}\right)\right) \# \Gamma_{D,1}/u_{D,1}}
$$
must also exist.  But $a_{-D}\neq 0$ and the only dependence on $k$ is the term $(-1)^k$, leading to a contradiction.
\end{proof}

\subsection{Proof of the main theorem}
We are now ready to prove Theorem \ref{thm:main1} and Theorem~\ref{thm:main2}. Let $D < 0$ be a fundamental discriminant and let $c$ be an integer with $(c,N\ell)=1$.  Define 
$$
g_{D,c}(s):= \frac{ r^*(Q_s, -Dc^2) u_{D,c}}{\# \Gamma_{D,c}}
$$
and 
$$
h_{D,c}:= \frac{ r^*(\gen(Q_s), -Dc^2) u_{D,c}}{\# \Gamma_{D,c}}.
$$
Note that $h_{D,c}$ is independent of $s$.  Proposition \ref{prop:gen-spn} combined with equation (\ref{eqn:fspn}) gives
$$
g_{D,c}(s) = h_{D,c} + O_s\left((-Dc^2)^{-\frac{1}{28}+\epsilon}\right).
$$
We now divide by $w_{s}$ and sum over all $s'\in X_0(N)^{\SSS}_{/\F_{\ell^2}}$.  Recall from Proposition \ref{prop:corresp} that 
$$
\frac{g_{D,c}(s')\#\Gamma_{D,c}}{w_{s'}}=  r^*(Q_{s',-Dc^2})\frac{u_{D,c}}{w_{s'}}
$$
is the number of optimal embeddings of $ \cO_{D, c}$ into $R_{s'}$. Summing over all $s'\in X_0(N)^{\SSS}_{/\F_{\ell^2}}$ thus gives $\# \Gamma_{D,c}$.  Hence, we have
\begin{equation}
1= \sum_{s'\in X_0(N)^{\SSS}_{/\F_{\ell^2}}} \frac{g_{D,c}(s')}{w_{s'}} = h_{D,c} \sum_{s'\in X_0(N)^{\SSS}_{/\F_{\ell^2}}} \frac{1}{w_{s'}} + O\left((-Dc^2)^{-\frac{1}{28}+\epsilon}\right).
\end{equation}
Therefore 
$$
h_{D,c} = \frac{1}{\sum_{s'\in X_0(N)^{\SSS}_{/\F_{\ell^2}}}1/w_{s'}} + O\left((-Dc^2)^{-\frac{1}{28}+\epsilon}\right).
$$
Thus the limit $\ds \lim_{-Dc^2\to\infty} h_{D,c}$ exists, and we obtain
$$
f(s) = \lim_{-Dc^2\to\infty}g_{D,c}(s) = \lim_{-Dc^2\to\infty}\left[ h_{D,c}(s) +O\left((Dc^2)^{-\frac{1}{28}+\epsilon}\right)\right] = \frac{1}{\sum_{s'\in X_0(N)^{\SSS}_{/\F_{\ell^2}}}} = w_s\mu_{\can}(s).
$$
But this is precisely equation (\ref{eqn:quadmain}), and hence we obtain Theorem \ref{thm:main1}.

\begin{rem}
Due to dependence on Siegel's lower bound for the class number, Theorem \ref{thm:main1} is ineffective.  However, if we fix a fundamental discriminant $D<0$ and only vary the conductor $c$, then this result becomes effective due to known growth of the class number in a fixed square class.  Moreover, in a fixed square class the $-D$-th Shimura correspondence implies that the difference
$$
a_c(s):= g_{D,c}(s) - h_{D,c}
$$
are coefficients of a weight 2 cusp form.  Using Deligne's optimal bound, the error term can be improved to $O(c^{-1/2+\epsilon})$. Therefore, the error can be written as 
$$
O((-D)^{-\frac{1}{28}+\epsilon} c^{-\frac{1}{2} +\epsilon}).
$$
\end{rem}

\section{Distribution relations method}
In this section, we establish equidistribution when the fundamental discriminant 
$D < 0$ is fixed and the conductor varies using an alternative argument based on 
the distribution relations for Heegner points and Hecke eigenvalue bounds. 

\subsection{An easier equidistribution theorem}\label{sec:easy_case}

Here, we only consider a special infinite set of conductors $c$ and a fixed fundamental 
discriminant $D < 0$. Let $\mathcal P$ be the 
set of all primes $r \nmid N$, such that $r$ is inert in $K$. Let $\mathcal I$ be the 
set of all integers that are square-free products of primes in $\mathcal P$. Note that 
$\Lambda \subset \cI$. Under the same hypothesis as before, we will prove the following statement: 

\begin{thm}\label{thm:easier}
Given a Galois orbit $\Gamma_{D,c}$ let $\mu_{D,c}$ be the measure on $X_0(N)^{\SSS}_{/\F_{\ell^2}}$ defined as in Theorem~\ref{thm:main1}. Then 
$\displaystyle \lim_{\substack{c \rightarrow \infty, \\\ c \in \mathcal I}} \mu_{D,c} = \mu_{\can}$. 
\end{thm}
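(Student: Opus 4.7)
The plan is to reduce weak-$*$ convergence $\mu_{D,c} \to \mu_{\can}$ to decay of Hecke eigenvalues, via a spectral decomposition of the space of $\C$-valued functions on $X_0(N)^{\SSS}_{/\F_{\ell^2}}$. First I would invoke the adelic description of Section~\ref{subsec:adelic_supersingular} to transport the standard Brandt-matrix/Hecke action to $X_0(N)^{\SSS}_{/\F_{\ell^2}}$, obtaining a commuting family of self-adjoint operators $T_r$ ($r \nmid N\ell$) with respect to the weighted inner product $\langle f,g\rangle := \sum_{s} f(s)\overline{g(s)}/w_s$. Under this inner product, $\mu_{\can}$ is exactly the orthogonal projection of $\mu_{D,c}$ onto the line of constants, and by Jacquet-Langlands the orthogonal complement is a direct sum of Hecke eigenlines parametrized by weight-$2$ newforms on $\Gamma_0(N\ell)$ that are new at $\ell$. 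It therefore suffices to show that $\int \phi \, d\mu_{D,c} \to 0$ for each Hecke eigenform $\phi$ in that complement.

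Next I would use the norm-compatibility (distribution) relation for Heegner points: if $r \in \mathcal{P}$ and $r \nmid c$, then $\Gal(K_D[cr]/K_D[c])$ has order $r+1$ and
\[
\sum_{\sigma \in \Gal(K_D[cr]/K_D[c])} \sigma(x_{cr}) \;=\; T_r(x_c)
\]
as a divisor on $X_0(N)_{/K_D[c]}$, with no correction term (in contrast to the split case). Applying $\red_\ell$, summing over $\Gal(K_D[c]/K_D)$, and normalizing by $|\Gamma_{D,cr}| = (r+1)\,|\Gamma_{D,c}|$, one obtains the measure identity
\[
\int \phi \, d\mu_{D,cr} \;=\; \frac{1}{r+1}\int T_r\phi \, d\mu_{D,c}
\]
for every $\phi$. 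Iterating over the distinct inert prime factors $r_1,\dots,r_k$ of $c \in \mathcal{I}$, and using the Deligne-Ramanujan bound $|a_{r_i}(\phi)| \le 2\sqrt{r_i}$ for the Hecke eigenvalues of $\phi$, one gets
\[
\left|\int \phi \, d\mu_{D,c}\right| \;\le\; \|\phi\|_\infty \prod_{i=1}^k \frac{|a_{r_i}(\phi)|}{r_i+1} \;\le\; \|\phi\|_\infty \prod_{i=1}^k \frac{2}{\sqrt{r_i}}.
\]
Since $\mathcal{P}$ is infinite, this product tends to $0$ as $c \to \infty$ through $\mathcal{I}$ (whether by acquiring more prime factors or by the factors themselves growing), so $\int \phi \, d\mu_{D,c} \to 0$ for every eigenfunction $\phi$ orthogonal to the constants, which is exactly weak-$*$ convergence to $\mu_{\can}$.

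The hard part will be justifying that the generic-fiber distribution relation transfers cleanly to the measure identity on the supersingular locus. This requires checking that $T_r$ has good reduction at $\ell$ for $r \nmid N\ell$ and restricts to the Brandt operator under the bijection $\varphi$ of Section~\ref{subsec:adelic_supersingular}, and that no unexpected collisions occur in the Galois orbit upon reduction (so that the $(r+1)$-to-$1$ combinatorics survives). Both are standard given the adelic dictionary in Sections~\ref{subsec:gross-points}--\ref{subsec:adelic_supersingular}, but they are precisely what pins down the clean $\tfrac{1}{r+1}$ normalization. Once in place, the rest of the argument is a routine appeal to Deligne's bound, which is why the distribution-relations method yields a cleaner and effective proof in the restricted regime of fixed $D$ and $c \in \mathcal{I}$, in contrast to the half-integral weight strategy of Section~\ref{sec:proof}.
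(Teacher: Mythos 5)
Your proposal is correct and follows essentially the same route as the paper: the inert-prime distribution relation $X_{cr}=T_rX_c$, Hecke equivariance of $\red_\ell$, the eigenspace decomposition of the supersingular divisor space, and the Deligne--Ramanujan bound $|a_r(f)|\le 2\sqrt{r}$ contributing a factor $2/\sqrt{r}$ per inert prime. The only cosmetic difference is that you identify $\mu_{\can}$ with the Eisenstein component via self-adjointness of the Hecke (Brandt) operators for the weighted inner product, whereas the paper identifies $Y_{1,\Eis}$ with $D_{\mu_{\can}}$ by a degree argument in Section~\ref{subsec:candiv}.
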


\begin{rem}
The assumption that $c \in \mathcal I$ is not necessary and the argument in the more 
general case is exactly the same, except for the more technical form of the 
distribution relations. Here, we prove only the less technical statement 
where the distribution relations are easier to work with (see Section~\ref{sec:distr}). 
\end{rem}

\subsection{Distribution relations}\label{sec:distr}

Let $X_c \in \Div(X_0(N))$ be defined as $\ds X_c := \sum_{\sigma \in \Gal(K[c]/K)} (x_c^\sigma)$. We will prove the following distribution relation: 

\begin{lem}\label{lem:distrel}
For any prime number $\ell$ which is inert in $K$ and any positive integer $c$ coprime 
to $\ell$, the following distribution relation holds: 
$$
X_{c\ell} = T_\ell X_c. 
$$  
\end{lem}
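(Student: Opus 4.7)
The plan is to prove the stronger divisor identity
$$T_\ell\, x_c = \sum_{\tau \in \Gal(K[c\ell]/K[c])} x_{c\ell}^\tau = \Tr_{K[c\ell]/K[c]}(x_{c\ell})$$
and then deduce the lemma by summing over $\Gal(K[c]/K)$. The starting point is the moduli description of $T_\ell$: for $(\ell,N)=1$ and any enhanced elliptic curve $[E,C]$,
$$T_\ell[E,C]=\sum_{C'}[E/C',\,(C+C')/C'],$$
with $C'$ running over the $\ell+1$ subgroups of $E$ of order $\ell$. Applied to $x_c=[\C/\cO_{D,c}\to\C/\n_{D,c}^{-1}]$, these subgroups correspond to $\F_\ell$-lines in $E_c[\ell]\simeq \cO_{D,c}/\ell\cO_{D,c}$, and since $(c,\ell)=1$ and $\ell$ is inert in $K$, this quotient is isomorphic to $\F_{\ell^2}$ as an $\cO_{D,c}$-algebra.

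The key step will be to show that every one of the $\ell+1$ quotients $E_c/C'$, with its induced level-$N$ structure, is a Heegner point of conductor exactly $c\ell$. The decisive input is that $\F_{\ell^2}$ is a field, so no $\F_\ell$-line inside it is preserved by multiplication by the full ring $\cO_{D,c}/\ell\cO_{D,c}$; hence no $C'$ is $\cO_{D,c}$-stable. The standard analysis of prime-degree isogenies between CM elliptic curves then forces $\End(E_c/C')=\cO_{D,c\ell}$ for every $C'$ (the conductor is unchanged away from $\ell$ and is multiplied by $\ell$ at $\ell$), and the induced level-$N$ structure $(C+C')/C'$ remains cyclic of order $N$ because $(\ell,N)=1$.

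The next step will be to match $T_\ell x_c$ with $\Tr_{K[c\ell]/K[c]}(x_{c\ell})$. The ring class number formula gives $[K[c\ell]:K[c]]=\ell-\left(\tfrac{D}{\ell}\right)=\ell+1$ since $\ell$ is inert, matching the number of summands. By CM theory each summand is defined over $K[c\ell]$, and the divisor $T_\ell x_c$ is $K[c]$-rational (as $x_c$ lives over $K[c]$ and $T_\ell$ is defined over $\Q$). The summands are therefore permuted by $\Gal(K[c\ell]/K[c])$; since one of them (coming from the natural containment $\cO_{D,c\ell}\subset\cO_{D,c}$) is in the $\Gal(K[c\ell]/K[c])$-orbit of $x_{c\ell}$, and that orbit already has the full cardinality $\ell+1$, it must exhaust all summands.

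Finally, lifting each $\sigma\in\Gal(K[c]/K)$ to $\tilde\sigma\in\Gal(K[c\ell]/K)$, the cosets $\tilde\sigma\cdot\Gal(K[c\ell]/K[c])$ partition $\Gal(K[c\ell]/K)$, so by $\Q$-rationality of $T_\ell$,
$$T_\ell X_c=\sum_\sigma (T_\ell x_c)^{\tilde\sigma}=\sum_{g\in\Gal(K[c\ell]/K)}x_{c\ell}^g=X_{c\ell}.$$
The main obstacle is the endomorphism-ring computation in the middle step: one must verify that \emph{every} one of the $\ell+1$ kernels strictly enlarges the conductor at $\ell$, which is precisely what the inertness hypothesis on $\ell$ guarantees through the absence of $\F_\ell$-stable lines in the field $\F_{\ell^2}$.
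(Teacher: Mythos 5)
Your proposal is correct, but it takes a genuinely different route from the paper. The paper's proof is essentially a one-liner: it quotes Gross's pointwise distribution relation $\Tr_{K[c\ell]/K[c]}(x_{c\ell}^{\sigma}) = T_\ell(x_c^{\sigma})$ from the literature and then sums over coset representatives of $\Gal(K[c\ell]/K[c])$ in $\Gal(K[c\ell]/K)$, exactly as in your last display. You instead reprove the pointwise relation from scratch: the moduli description of $T_\ell$, the identification $E_c[\ell]\isom \cO_{D,c}/\ell\cO_{D,c}\isom\F_{\ell^2}$, and the absence of $\cO_{D,c}$-stable lines (inertness rules out horizontal $\ell$-isogenies, $(c,\ell)=1$ rules out ascending ones) to force conductor exactly $c\ell$ for all $\ell+1$ quotients; then a Galois argument -- $K[c]$-rationality of the divisor $T_\ell x_c$, one distinguished summand in the orbit of $x_{c\ell}$, and the degree count $[K[c\ell]:K[c]]=\ell+1$ -- identifies the summands with the trace. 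What the paper's approach buys is brevity and reliance on a standard reference; what yours buys is a self-contained argument that makes transparent exactly where the hypotheses enter, and the orbit-counting step neatly sidesteps both the bookkeeping of the $\n$-orientation (which distinguishes the $2^{\nu(N)}$ orbits of Lemma~\ref{lem:galorbitsheeg}) and the distinctness of the $\ell+1$ summands; even if the distinguished summand only lay in the full $\Gal(K[c\ell]/K)$-orbit rather than the $\Gal(K[c\ell]/K[c])$-orbit, your final summation over lifts would still yield $X_{c\ell}=T_\ell X_c$. Two small caveats, shared equally by the paper's citation of Gross: the orbit-size and degree counts implicitly assume the stabilizer of $x_{c\ell}$ is trivial (as asserted in Lemma~\ref{lem:galorbitsheeg}) and that $\cO_{D,c}^{\times}=\{\pm1\}$, so the exceptional discriminants $D\in\{-3,-4\}$ with $c=1$ would need the usual unit-index correction.
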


\begin{proof}
Let $S$ be a set of coset representatives for $\Gal(K[c\ell]/K[c]) / \Gal(K[c]/K)$. 
The distribution relation for Heegner points \cite[\S 6]{gross:heegner_points} 
is the following equality of divisors of degree $\ell+1$ on $X_0(N)$:  
$$
\Tr_{K[\ell c] / K[c]} (x_{c\ell}^\sigma) = T_\ell (x_c^\sigma), \ \sigma \in \Gal(K[c\ell]/K),   
$$
i.e., 
$$
\sum_{\tau \in \Gal(K[c\ell]/K[c])} (x_{c\ell}^{\sigma \tau}) = T_\ell (x_c^\sigma), \ \sigma 
\in \Gal(K[c\ell]/K). 
$$
Hence, 
$$
\sum_{\sigma \in S}\sum_{\tau \in \Gal(K[c\ell]/K[c])} (x_{c\ell}^{\sigma \tau}) = \sum_{\sigma \in S} T_\ell (x_c^\sigma), \ \sigma 
\in \Gal(K[c\ell]/K),  
$$
which implies 
$$
X_{c\ell} = T_\ell X_c. 
$$
\end{proof}

\subsection{Proof of the main theorem}

\begin{proof}[Proof of Theorem~\ref{thm:easier}]
First, we note that the reduction map $\red_\ell : X_0(N)_{/\Q} \ra X_0(N)_{/\F_\ell}$ 
defined in Section~\ref{sec:intro} is Hecke equivariant. Thus, 
$$
\red_\ell(X_{cr}) = T_r \red_\ell(X_c). 
$$ 
Next, $\red_\ell(X_{cr})$ and $\red_\ell(X_c)$ belong to the subgroup 
$\Div^{\SSS}(X_0(N)_{/\Fbar_\ell})$ of divisors supported on the supersingular 
points of $X_0(N)_{/\Fbar_\ell}$. The Hecke algebra $\T_{N\ell}$ acts on the vector space \linebreak $V_{\SSS} = \Div^{\SSS}(X_0(N)_{/\Fbar_\ell}) \otimes \Qbar$ via its $\ell$-new quotient 
$\T^{\ell-\new}_{N\ell}$ (see~\cite{serre:letter} or~\cite{parent}). Let 
$$
V_{\SSS} = V_{\Eis} \oplus \left ( \bigoplus_{f} V_f \right ) 
$$ 
be the eigenspace decomposition of $V$, where $f$ ranges over all normalized
eigenforms $f \in S_2^{\ell-\new}(\Gamma_0(N\ell))$, 
$$
V_f = \{v \in V_{\SSS}\ :\ T_r v = a_r(f) v \textrm{ for all primes } r \}, 
$$ 
and 
$$
V_{\Eis} = \{v \in V_{\SSS} \ : \ T_r v = (r+1)v \textrm{ for all primes } r\}. 
$$   
Here, $a_r(f)$ denotes the $r$-th Fourier coefficient of the eigenform $f$. 

Let $Y_c = \ds \frac{1}{\# \Pic(\cO_c)}\red_\ell(X_c) \in V_{\SSS}$. 
It is easy to see that $\ds Y_c = \sum_{s \in X_0(N)^{\SSS}_{/\F_{\ell^2}}} \mu_c(s) \cdot (s)$. 
We can write the decomposition of $Y_c$ as 
$$
Y_c = Y_{c, \Eis} + \sum_f Y_{c, f}, \ Y_{c, f} \in V_f, \ Y_{c, \Eis} \in V_{\Eis}.
$$
The distribution relation from Lemma~\ref{lem:distrel} implies that 
$\ds \# \Pic(\cO_{cr})Y_{cr} = \# \Pic(\cO_c) T_r Y_c$. Since 
$\# \Pic(\cO_{cr}) = (r+1)\# \Pic(\cO_c)$ then 
$$
\ds Y_{cr} = \frac{1}{r+1} T_r Y_c.
$$  
We use this equality to obtain 
$Y_{cr, \Eis} = Y_{c, \Eis}$ and 
$\ds Y_{cr, f} = \frac{a_r(f)}{r+1} Y_{c, f}$ for any normalized eigenform 
$f \in S_2^{\ell-\new}(\Gamma_0(N\ell))$. 

The Ramanujan-Petersson conjecture then implies that 
$$
\ds \frac{a_r(f)}{r+1} \leq \frac{2 r^{1/2}}{r+1} \leq \frac{2}{r^{1/2}}.
$$ 
Thus, we obtain by induction on the number of prime divisors of $c$ that   
$$
Y_{c} = Y_{1, \Eis} + O(c^{-1/2}). 
$$
This means that $\ds \lim_{\substack{c \ra \infty, \\\ c\in \cI}} Y_c = Y_{1, \Eis}$. 

Finally, one uses the result from Section~\ref{subsec:candiv} to conclude that 
$Y_{1, \Eis}$ is equal to the divisor associated to the canonical measure $\mu_{\can}$. 
\end{proof}

\subsection{The divisor $Y_{1, \Eis}$}\label{subsec:candiv}
Let  
$$
\ds D_{\mu_{\can}} := \sum_{s \in X_0(N)^{\SSS}_{/\F_{\ell^2}}} \mu_{\can}(s) \cdot (s) \in V_{\SSS}.
$$ 
It is well-known (see, e.g.,~\cite[Lem.2.5]{vatsal:uniform}) that the divisor 
$D_{\mu_{\can}}$ is Eisenstein. In other words,   
$$
T_r D_{\mu_{\can}} = (r+1)D_{\mu_{\can}}  
$$  
for every prime $(r, N) = 1$. Next, we verify that $D_{\mu_{\can}}$ is the same as the Eisenstein part $Y_{1, \Eis}$ of $Y_1$: 
\begin{lem}
We have 
$$
D_{\mu_{\can}} = Y_{1, \Eis}. 
$$
\end{lem}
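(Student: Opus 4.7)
The plan is to equip $V_{\SSS}$ with the standard Gross pairing, exploit the resulting orthogonality of Hecke eigenspaces, and verify the identity by a one-line computation showing that $Y_1$ and $D_{\mu_{\can}}$ have the same orthogonal projection onto $V_{\Eis}$.

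First I would introduce the pairing $\langle (s), (s')\rangle := w_s \delta_{s,s'}$ on $V_{\SSS}$. The classical Brandt matrix identity $w_s B_r(s,s') = w_{s'} B_r(s',s)$ (a reformulation of the symmetry of the number of degree-$r$ isogenies between enhanced supersingular elliptic curves, weighted by automorphisms) shows that for every prime $r$ with $(r,N\ell)=1$ the Hecke operator $T_r$ acts self-adjointly with respect to $\langle\cdot,\cdot\rangle$. Consequently, the eigenspace decomposition $V_{\SSS}=V_{\Eis}\oplus\bigoplus_f V_f$ is orthogonal, so the Eisenstein projection $Y_{1,\Eis}$ is simply the orthogonal projection of $Y_1$ onto $V_{\Eis}$.

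Next I would carry out the direct computation. Since $\mu_{\can}(s) = (1/w_s)/\sum_{s'}(1/w_{s'})$, one gets
$$
\langle D_{\mu_{\can}}, D_{\mu_{\can}}\rangle = \sum_s w_s \mu_{\can}(s)^2 = \Bigl(\sum_{s'} 1/w_{s'}\Bigr)^{-1},
$$
and using $\sum_s \mu_{D,1}(s) = 1$ one gets
$$
\langle Y_1, D_{\mu_{\can}}\rangle = \sum_s w_s \mu_{D,1}(s)\mu_{\can}(s) = \frac{\sum_s \mu_{D,1}(s)}{\sum_{s'} 1/w_{s'}} = \Bigl(\sum_{s'} 1/w_{s'}\Bigr)^{-1}.
$$
Hence $\langle Y_1 - D_{\mu_{\can}}, D_{\mu_{\can}}\rangle = 0$. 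Combined with the fact (established just above the lemma) that $D_{\mu_{\can}}\in V_{\Eis}$, together with the one-dimensionality of $V_{\Eis}$ (multiplicity one for the Eisenstein eigensystem on the $\ell$-new quotient $\T_{N\ell}^{\ell\text{-new}}$, so that $V_{\Eis}=\bar{\Q}\cdot D_{\mu_{\can}}$), this orthogonality forces the Eisenstein projections to agree, giving $Y_{1,\Eis} = D_{\mu_{\can}}$.

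The main obstacle is not the computation itself but the two structural inputs it rests on: self-adjointness of the Hecke action under the Gross pairing, and the one-dimensionality of $V_{\Eis}$ inside the $\ell$-new quotient. Both are well-documented in the literature (for instance in Gross's paper on heights and Ribet's work on mod-$\ell$ representations, as well as Mazur's Eisenstein-ideal results), and should be invoked by citation rather than reproved. If one wishes to avoid the multiplicity-one input, the same inner product argument can be replaced by a degree argument: the functional $\deg(\sum a_s(s)) := \sum a_s$ satisfies $\deg(T_r v) = (r+1)\deg(v)$ because each row of the Brandt matrix $B_r$ sums to $r+1$, so the Ramanujan bound $|a_r(f)|\leq 2\sqrt{r}$ forces $\deg$ to vanish on every $V_f$; then $Y_{1,\Eis}$ and $D_{\mu_{\can}}$ both lie in $V_{\Eis}$ and both have degree $1$, and one-dimensionality of $V_{\Eis}$ concludes the proof.
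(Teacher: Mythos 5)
Your argument is correct, but it verifies the identity by a different mechanism than the paper. The paper works with the degree functional: $\deg(D_{\mu_{\can}})=\deg(Y_1)=1$, the cited fact that a divisor in $V_{\SSS}$ is cuspidal if and only if it has degree zero, and the Hecke eigenvalue bounds then force the Eisenstein, degree-zero divisor $D_{\mu_{\can}}-Y_{1,\Eis}$ to vanish. You instead use the Gross pairing $\langle (s),(s')\rangle=w_s\delta_{s,s'}$, self-adjointness of the $T_r$ (Brandt matrix symmetry), and an explicit computation showing $\langle Y_1,D_{\mu_{\can}}\rangle=\langle D_{\mu_{\can}},D_{\mu_{\can}}\rangle=\bigl(\sum_{s'}1/w_{s'}\bigr)^{-1}$, so that the orthogonal projection of $Y_1$ onto $V_{\Eis}=\Qbar\cdot D_{\mu_{\can}}$ is exactly $D_{\mu_{\can}}$; the computation is right, and this route has the advantage of computing the Eisenstein component of any $Y_c$ directly, at the cost of importing two structural facts (self-adjointness and multiplicity one of the Eisenstein eigensystem). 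Note, though, that the underlying input is essentially the same in both proofs: the paper's ``cuspidal iff degree zero'' is equivalent to the one-dimensionality of $V_{\Eis}$ together with nonvanishing of $\deg$ on it, so your approach is not logically lighter, and your closing remark that the degree variant ``avoids the multiplicity-one input'' is internally inconsistent, since you still invoke one-dimensionality of $V_{\Eis}$ there (that degree variant is, in fact, the paper's own proof). Finally, be careful with the citations for multiplicity one: Mazur's Eisenstein-ideal results concern prime level, whereas here $N$ is arbitrary (prime to $\ell$); for general $N$ the one-dimensionality of the Eisenstein part of the Brandt module is best justified via the Eichler/Jacquet--Langlands description (only the trivial character of the idele class group contributes, and no cusp form can have system of eigenvalues $r+1$ by the Weil bounds), or by citing the same sources the paper uses for the structure of $V_{\SSS}$.
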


\begin{proof}
First, note that $\deg(D_{\mu_{\can}}) = 1 = \deg(Y_1)$. Furthermore, a divisor 
$D \in \Div(X_0(N)^{\SSS}_{/\F_{\ell^2}}) \otimes \Qbar$ is cuspidal if and only if it has degree zero (see e.g.,~\cite{serre:letter}). Thus, $\deg(Y_{1, \cusp}) = 0$ and hence, $\deg(Y_{1, \Eis}) = 1$.  

Next, consider the exact sequence 
$$
0 \ra \Div^0(X_0(N)^{\SSS}) \ra \Div(X_0(N)^{\SSS}_{/\F_{\ell^2}}) \xra{\deg} \Z \ra 0, 
$$ 
and look at the divisor $D = D_{\mu_{\can}} - Y_{1, \Eis}$. We know that $\deg(D) = 0$ and hence, $D$ is cuspidal. At the same time, $D$ is Eisenstein. If $D \ne 0$ then one would obtain a contradiction by using the Hecke eigenvalue bounds for cusp forms. Thus, $D = 0$ and hence, $Y_{1, \Eis} = D_{\mu_{\can}}$. 
\end{proof}

\section{Effective surjectivity results}\label{sec:effective}
We have seen in Theorem \ref{thm:main1} that $\mu_{D,c} \to \mu_{\can}$ as $d_c := -Dc^2 \to \infty$.  In particular, for sufficiently large 
$d_c$ we have $\mu_{D,c}(s)>0$ for every $s\in X_0(N)^{\SSS}_{/\F_{\ell^2}}$, 
giving surjectivity of the reduction $\red_{\ell}$ from $\Gamma_{D,c}$ to $X_0(N)^{\SSS}_{/\F_{\ell^2}}$. Here, we discuss effective versions of this 
surjectivity result.  

Recall that the proof of Theorem~\ref{thm:main1} uses Siegel's lower bound on the class number (see \eqref{eqn:fspn}). Since Siegel's bound $\#\Gamma_{D,c}\gg_{c,\eps} D^{\frac{1}{2}-\eps}$ is ineffective due to the fact that Siegel proved this result by first assuming the truth of GRH for Dirichlet $L$-functions and then proved the bound 
again with a different implied constant depending on the location of a possible Siegel zero \cite{siegel:class_number}. The best known effective results are due to Oesterl\'{e} \cite{oesterle:class_number}, but the growth obtained is only logarithmic in $D$. Hence, the surjectivity will be ineffective whenever we allow the fundamental 
discriminant to vary. 

Thus, we fix a fundamental discriminant $D < 0$. Given a supersingular point 
$s \in X_0(N)^{\SSS}_{/\F_{\ell^2}}$, decompose $\theta_{Q_s}$ as
\begin{equation}\label{eqn:eff_decomp}
\theta_{Q_s} - \theta_{\spn(Q_s)} = \sum_{i=1}^r b_i g_i,
\end{equation}
where $b_i\in \C$ and $\{g_1, \dots, g_r\}$ is a fixed set of cuspidal Hecke 
eigenforms in the orthogonal complement (under the Petersson inner product) of 
the space spanned by one-dimensional theta series of weight $3/2$.  We will denote 
the $d$-th coefficient of $g_i$ by $a_{g_i}(d)$ and the $-D$-th Shimura correspondence (recall the extended definition in Section \ref{sec:backmodular} given by Kohnen for fundamental discriminants) by $G_i:=S_{-D, 1}(g_i)$.  Denote the number of distinct prime divisors of $c$ by $v(c)$.  

The following theorem establishes an effective bound for $c$ (depending on the 
decomposition \eqref{eqn:eff_decomp} and the fundamental discriminant $-D$) beyond which the preimage $\red_\ell^{-1}(s)$ is non-empty.  Taking the maximum occurring bound over all $s'\in X_0(N)^{\SSS}_{/\F_{\ell^2}}$ gives a bound depending only on $N$, $\ell$ and $D$ beyond which surjectivity must hold.

\begin{thm}\label{thm:eff_surjectivity}
Let $c > 2$ be an integer prime to $N\ell$ that satisfies the following inequality 
$$
\frac{c^{1/2}}{2^{2v(c)+1}\sigma_0(c)\log c} >  \frac{1}{\log 2} \frac{u_{D,1}}{\#\Gamma_{D,1}} 
\left ( \sum_{i=1}^r \left|b_i a_{g_i}(-D)\right| \right ) \left ( \sum_{s'\in X_0(N)^{\SSS}_{/\F_{\ell^2}}}1/w_{s'} \right ) 
$$
Then the reduction map $\red_\ell : \Gamma_{D, c} \ra X_0(N)_{/\F_{\ell^2}}^{\SSS}$ 
satisfies $\red_\ell^{-1}(s) \ne \varnothing$.  
\end{thm}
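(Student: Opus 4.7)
The plan is to show the stronger conclusion $r^*(Q_s, -Dc^2) > 0$, which is equivalent to the non-emptiness of $\red_\ell^{-1}(s) \cap \Gamma_{D,c}$ via Proposition~\ref{prop:corresp} together with the bijection between Heegner points reducing to $s$ and $R_s^\times$-conjugacy classes of optimal embeddings $\cO_{D,c} \hookrightarrow R_s$. Since $(c, N\ell) = 1$, Proposition~\ref{prop:gen-spn} equates spinor-genus and genus counts, so I decompose
$$
r^*(Q_s, -Dc^2) = r^*(\gen(Q_s), -Dc^2) + E_c,
$$
where $E_c$ is the primitive $(-Dc^2)$-th Fourier coefficient of the cuspidal form $\theta_{Q_s} - \theta_{\spn(Q_s)} = \sum_i b_i g_i$, obtained by Möbius inversion from the ordinary coefficients. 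The task is then to lower-bound the main term and upper-bound $|E_c|$.

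For the main term, Jones's formula \eqref{eqn:jones} (with $\Delta=1$) together with the class number formula \eqref{eqn:class_formula} shows that $r^*(\gen(Q_s), -Dc^2)\,u_{D,c}/\#\Gamma_{D,c}$ is independent of both $s$ and $c$ (whenever $(c,N\ell)=1$), and the summation identity at the end of the proof of Theorem~\ref{thm:main1} forces this constant to equal $1/\sum_{s'} 1/w_{s'}$. Consequently,
$$
r^*(\gen(Q_s), -Dc^2) \;=\; \frac{\#\Gamma_{D,1}}{u_{D,1}\sum_{s'} 1/w_{s'}} \cdot c \prod_{p\mid c}\Bigl(1 - \frac{\chi_D(p)}{p}\Bigr),
$$
and an effective Mertens-type lower bound on $\phi(c)/c$ bounds the product below by a quantity of order $\log 2 / \log c$, yielding the desired explicit lower bound on the main term.

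For the error, I first apply Möbius inversion to pass to ordinary coefficients,
$$
|E_c| \;\leq\; \sum_{d\mid c,\, \mu(d)\neq 0}\; \sum_i |b_i|\,|a_{g_i}(-D(c/d)^2)|,
$$
and then invoke the $-D$-th Kohnen--Shimura correspondence $G_i := S_{-D, 1}(g_i) \in S_2(\Gamma_0(4N\ell))$. Dividing the Shimura Dirichlet series by $L(s, \psi_{-D})$ gives $a_{g_i}(-Dm^2) = \sum_{e \mid m,\, \mu(e)\neq 0} \mu(e)\,\psi_{-D}(e)\, a_{G_i}(m/e)$, and combining this with Deligne's optimal bound $|a_{G_i}(n)| \leq \sigma_0(n) \sqrt{n}$ and the Hecke multiplicativity of $g_i$ at primes away from $N\ell$ yields
$$
|a_{g_i}(-Dm^2)| \;\leq\; |a_{g_i}(-D)| \cdot 2^{v(m)} \sigma_0(m) \sqrt{m}.
$$
Substituting, bounding $\sigma_0(c/d) \leq \sigma_0(c)$, $2^{v(c/d)} \leq 2^{v(c)}$, $\sqrt{c/d} \leq \sqrt{c}$, and using that $c$ has $2^{v(c)}$ squarefree divisors gives
$$
|E_c| \;\leq\; \Bigl(\sum_i |b_i a_{g_i}(-D)|\Bigr)\cdot 2^{2v(c)} \sigma_0(c) \sqrt{c}.
$$

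Comparing the two estimates, the condition $|E_c| < \tfrac{1}{2} r^*(\gen(Q_s), -Dc^2)$ (sufficient to force $r^*(Q_s, -Dc^2) > 0$) rearranges exactly to the hypothesis of the theorem, with the factor of $2$ in $2^{2v(c)+1}$ accommodating the safety cushion. The principal obstacle is the careful bookkeeping of combinatorial constants in the Deligne-type estimate for $|a_{g_i}(-Dm^2)|$: one must verify that the Hecke-multiplicative structure of half-integral weight Kohnen newforms is correctly applied at primes $p \nmid N\ell$ (where the assumption $(c, N\ell) = 1$ isolates good Hecke relations with $|a_p(G_i)|\leq 2\sqrt p$), and that the crude summation of squarefree divisors does not inflate the bound beyond the stated $2^{2v(c)+1}\sigma_0(c)$ factor.
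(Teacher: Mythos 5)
Your proposal follows essentially the same route as the paper's proof: it equates spinor genus and genus via Proposition~\ref{prop:gen-spn}, evaluates the genus term through Jones's formula and the class number formula and bounds it below by $\varphi(c)$, and controls the cuspidal error by Möbius inversion, the $-D$-th Shimura correspondence and Deligne's bound, exactly as in equations \eqref{eqn:eff_class}--\eqref{eqn:effc_bound}. The only bookkeeping slip is your attribution of the factor $2$ in $2^{2v(c)+1}$ to a half-sized ``safety cushion'' (that accounting would need $\varphi(c)\geq \frac{(\log 2)\,c}{\log c}$, which fails, e.g., at $c=6$); in the paper it comes instead from the explicit bound $\varphi(c)\geq \frac{\log 2}{2}\,\frac{c}{\log c}$ for $c>2$, after which one only needs the error to be smaller than the full main term, and the final inequality is the same.
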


\begin{proof}
For the $\theta$-series $\theta_{Q_s}$ we have the decomposition 
$$
\theta_{Q_s}(z) = E(z) + H(z) + f(z), 
$$
where $E(z)$ is an Eisenstein series, $H(z)$ is in the space spanned by one-dimensional theta series of weight 3/2, and $f(z)$ is a cusp form in the orthogonal complement of the space spanned by one-dimensional theta series (see~\cite[p.156]{hanke:survey}). Moreover, from the work of Schulze-Pillot \cite{schulze-pillot:thetareihen}, we know that  
$\ds E(z) = \theta_{\gen(Q_s)}(z)$ and $\ds H(z) = \theta_{\spn(Q_s)}(z) - \theta_{\gen(Q_s)}(z)$. 

Let $a(n) := r(Q_s, n) - r(\spn(Q_s), n)$ be the $n$th Fourier coefficient of the 
form $f(z)$ and let $a^*(n) := r^*(Q_s, n) - r^*(\spn(Q_s), n)$.
Let $n > 0$ be an integer satisfying $(n, N\ell) = 1$. 
We know by Proposition~\ref{prop:gen-spn} that $r(\gen(Q_s), n) = r(\spn(Q_s), n)$. Therefore,  
$$
r(Q_s, n) = r(\gen(Q_s), n) + (r(Q_s, n) - r(\gen(Q_s), n)) = r(\gen(Q_s), n) + a(n). 
$$
Next, if $n = tc^2$ where $t$ is square-free, M\"{o}bius inversion gives us 
\begin{equation}
r^*(Q_s, n) = \sum_{c' \mid c} \mu(c') r(Q_s, n/c'^2) = r^*(\gen(Q_s), n) + \sum_{c' \mid c} \mu(c') a(n/c'^2).  
\end{equation}
By Proposition~\ref{prop:corresp}, we know that $s \in X_0(N)^{\SSS}_{/\F_{\ell^2}}$ is 
in the image of $\red_\ell : \Gamma_{D, c} \ra X_0(N)^{\SSS}_{/\F_{\ell^2}}$ if and only 
if $Q_s$ primitively represents $d_c = -Dc^2$. Thus, $s$ is not in the image of the reduction map if and only if $r^*(Q_s, d_c) = 0$, i.e., if and only if 
\begin{equation}\label{eqn:equiv}
r^*(\gen(Q_s), d_c) = - \sum_{c' \mid c} \mu(c') a(d_c/c'^2). 
\end{equation}
The left-hand side can be computed using Jones' formula and \cite[Cor.7.28,p.148]{Cox1} as it was applied previously for \eqref{eqn:rstargen}. We obtain
\begin{equation}\label{eqn:eff_class}
 r^*(\gen(Q_s), d_c) \sum_{s'\in X_0(N)^{\SSS}_{/\F_{\ell^2}}}1/w_{s'}=\frac{\#\Gamma_{D,c}}{u_{D,c}} = \left (\sum_{c'\mid c} \mu(c')\left(\frac{D}{c'}\right) \frac{c}{c'} \right ) \frac{\#\Gamma_{D,1}}{u_{D,1}}.
\end{equation}

For the right-hand side of \eqref{eqn:equiv}, we would like to express the Fourier coefficient $a(d_c)$ in terms of $a(-D)$. This cannot be done directly for an arbitrary cusp form $f$ in the orthogonal complement of the space of one-dimensional theta series, but could be achieved if $f$ were an eigenform (due to the recurrence relations of the Hecke operators). In order to get such a relation, we write  
$$
f(z) = \sum_{i = 1}^r b_i g_i(z), 
$$
where $g_i$'s are Hecke eigenforms of weight $3/2$ whose images $G_i$ under the $-D$-th Shimura correspondence $S_{-D, 1}$ are normalized Hecke eigenforms. 

Decomposing $\theta_{Q_s}-\theta_{\gen(Q_s)}$ gives
\begin{equation}\label{eqn:eff_astar}
a^*(d_c)= \sum_{i=1}^r b_i \sum_{c' \mid c}\mu(c') a_{g_i}\left(d_{c/c'}\right).
\end{equation}
If $g := g_i$ is a Hecke eigenform, the $-D$-th Shimura correspondence 
$G := S_{-D,1}(g) \in S_2(\Gamma_0(N\ell))$ is also a Hecke eigenform.  Assume further 
that $G$ is normalized so that $a_G(1)=1$.  By the multiplicity one theorem for forms of weight 2, there exists a newform $\widetilde{G}\in S_2(\Gamma_0(M))$ for some $M\mid N\ell$ 
such that $\ds G=\sum_{d \mid \frac{N\ell}{M}} C_{d} \widetilde{G} | V(d)$ for some constants $C_d$ (with $C_1=1$). Here, the operator $V(d)$ corresponds to one of the 
degeneracy maps (see e.g.,~\cite[p.28]{ono:book} for the definition). Notice that for $(c, N\ell) = 1$, the $c$th coefficient of 
$G$ corresponds to the $c$th coefficient of the newform $\widetilde{G}$.  
Since $c$ is relatively prime to the level, the $c$-th coefficient of $\widetilde{G}$ is determined by the eigenvalues under the Hecke operators.  

Using this connection and the definition of the $-D$-th Shimura correspondence to evaluate the coefficients of $\widetilde{G}$ (using the fact that $\widetilde{G}$ is normalized), the second author \cite[equation (4.2)]{Kane3} has shown for $c=p^m$ relatively prime to $F N\ell$,
$$
a_g(d_{cF})= a_g(d_F) \left(a_G(p^m)-\left(\frac{-D}{p}\right) a_G(p^{m-1})\right) = a_g(d_F) \sum_{c'|c}\mu(c')\left(\frac{-D}{c'}\right) a_G\left(\frac{c}{c'}\right).
$$
Here we have rewritten the right hand side so that extending by multiplicativity, it follows that
$$
a_g(d_c) = a_g(d_1) \sum_{c'\mid c}\mu(c')\left(\frac{D}{c'}\right) a_G\left(\frac{c}{c'}\right).
$$
Substituting this in equation (\ref{eqn:eff_astar}) gives the identity 
\begin{equation}\label{eqn:effc}
a^*(d_c) =\sum_{i=1}^r b_i a_{g_i}(d_1) \sum_{c'\mid c} \sum_{c''\mid \frac{c}{c'}} \mu(c')\mu(c'')\left(\frac{D}{c''}\right)a_{G_i}\left(\frac{c}{c'c''}\right).
\end{equation}
Thus we have established that the supersingular point $s\in X_0(N)^{\SSS}_{/\F_{\ell^2}}$ is not in the image of $\red_{\ell}$ from $\Gamma_{D,c}$ if and only if
\begin{equation}\label{eqn:eff_exp}
\frac{1}{\sum_{s'\in X_0(N)^{\SSS}_{/\F_{\ell^2}}}1/w_{s'}}\left ( \sum_{c'\mid c} \mu(c')\left(\frac{D}{c'}\right) \frac{c}{c'} \right ) \frac{\#\Gamma_{D,1}}{u_{D,1}} =  -\sum_{i=1}^r b_i a_{g_i}(d_1) \sum_{c'\mid c} \sum_{c''\mid \frac{c}{c'}} \mu(c')\mu(c'')\left(\frac{D}{c''}\right)a_{G_i}\left(\frac{c}{c'c''}\right).
\end{equation}
Now consider the Euler $\vphi$-function $\varphi(c):=\#\{m<c: (m,c)=1\}$.  Then
$$
\sum_{c'\mid c} \mu(c')\left(\frac{D}{c'}\right) \frac{c}{c'}\geq \varphi(c),
$$
since the inequality holds for $c$ being a prime power and both functions are multiplicative.  We can then use the explicit elementary bound $\ds \ds \varphi(c)\geq \frac{\log 2}{2} \frac{c}{\log c}$ for $c>2$ (cf. \cite[p.9]{handbook}).

We next pull the absolute value inside the sum on the right hand side of (\ref{eqn:eff_exp}) and use Deligne's optimal bound \cite{Deligne1} for integer weight cusp forms from the proof of the Weil conjectures, namely $|a_{G_i}(n)|\leq \sigma_0(n) n^{\frac{1}{2}}$.  Since $\# \{ c'\mid c: \mu(c')\neq 0\}=2^{v(c)}$ and $\sigma_0(c')\leq \sigma_0(c)$ for $c'\mid c$, we have
\begin{equation}\label{eqn:effc_bound}
|a^*(d_c)|\leq  2^{2v(c)} \sigma_0(c) c^{\frac{1}{2}} \sum_{i=1}^r |b_ia_{g_i}(D)|,
\end{equation}
giving the assertion. 
\end{proof}

\noindent \emph{2. The case $\#X_0(N)^{\SSS}_{/\F_{\ell^2}}$.} In the case when $\# X_0(N)^{\SSS}_{/\F_{\ell^2}}=2$ we obtain an explicit bound independent of $D$ beyond 
which surjectivity holds. Let $\ds m_s = \max\left(1,\frac{w_{s'}}{w_s}\right)$.  
 
\begin{lem}\label{lem:eff_onedim}
If $\# X_0(N)^{\SSS}_{/\F_{\ell^2}}=2$ then the inequality
\begin{equation}\label{eqn:eff_reduction}
\varphi(c)> m_s 2^{2v(c)} \sigma_0(c) c^{\frac{1}{2}}
\end{equation}
implies that the reduction $\red_{\ell}$ on $\Gamma_{D,c}$ is surjective for any fundamental discriminant $D<0$.
\end{lem}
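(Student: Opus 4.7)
The plan is to specialize the proof of Theorem~\ref{thm:eff_surjectivity} to the case $\# X_0(N)^{\SSS}_{/\F_{\ell^2}} = 2$, where the cuspidal ``correction'' $\theta_{Q_s} - \theta_{\spn(Q_s)}$ is one-dimensional, and to show that in this situation the general bound collapses to the stated clean inequality. Let $s, s'$ denote the two supersingular points and set $M := \sum_{s''} 1/w_{s''} = 1/w_s + 1/w_{s'}$. The mass-formula expression for the genus theta series yields
\begin{equation*}
\theta_{Q_s} - \theta_{\gen(Q_s)} = \frac{w_s}{w_s + w_{s'}}\bigl(\theta_{Q_s} - \theta_{Q_{s'}}\bigr),
\end{equation*}
and invoking Proposition~\ref{prop:gen-spn} to identify $\theta_{\gen(Q_s)}$ with $\theta_{\spn(Q_s)}$ on the relevant primitive coefficients, one sees that in the decomposition \eqref{eqn:eff_decomp} one may take $r = 1$. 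Consequently the sum $\sum_i |b_i a_{g_i}(-D)|$ appearing in Theorem~\ref{thm:eff_surjectivity} collapses to a single coefficient $|a_s(-D)|$, where $a_s(-D) = r(Q_s,-D) - r(\gen(Q_s),-D)$.

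Next I would bound this coefficient by a multiple of $r(\gen(Q_s),-D)$. A short manipulation of the mass-formula identity above gives
\begin{equation*}
a_s(-D) = \frac{w_s}{w_s + w_{s'}}\bigl(r(Q_s,-D) - r(Q_{s'},-D)\bigr).
\end{equation*}
Each $r(Q_{s''},-D)$ is nonnegative and bounded above by $w_{s''} M\, r(\gen(Q_s),-D)$ (a direct rearrangement of the mass formula), so applying the triangle inequality and simplifying with $M = (w_s+w_{s'})/(w_s w_{s'})$ yields $|a_s(-D)| \le m_s\, r(\gen(Q_s),-D)$. Plugging in the $c = 1$ case of Jones' formula \eqref{eqn:jones} --- together with a small M\"obius correction in the event that $-D$ is not squarefree to pass between $r$ and $r^*$ --- converts this into the desired clean bound
\begin{equation*}
\frac{M\, u_{D,1}\,|a_s(-D)|}{\#\Gamma_{D,1}} \le m_s.
\end{equation*}

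Finally I would substitute this estimate into the hypothesis of Theorem~\ref{thm:eff_surjectivity}: combined with the explicit $\varphi(c) \ge (\log 2)\,c/(2\log c)$ already used there to convert the Euler-$\varphi$ lower bound into the displayed form, it reduces the criterion of Theorem~\ref{thm:eff_surjectivity} to exactly the inequality \eqref{eqn:eff_reduction}. This forces $r^*(\gen(Q_s),d_c) > |a^*(d_c)|$, hence $r^*(Q_s,d_c) > 0$, so $s \in \red_\ell(\Gamma_{D,c})$; running the same argument at $s'$ (with $w_s, w_{s'}$ interchanged) captures the other supersingular point and yields full surjectivity. The main obstacle is the bookkeeping in the step $|a_s(-D)| \le m_s\, r(\gen(Q_s),-D)$: one must track every factor of $w_s/w_{s'}$ with no slack, confirming both that the crude bound on $|r(Q_s,-D)-r(Q_{s'},-D)|$ is tight enough to recover $m_s$ exactly, and that the $r$-versus-$r^*$ discrepancy when $4 \mid (-D)$ does not produce an extra constant.
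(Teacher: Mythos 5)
Your proposal is correct and follows essentially the same route as the paper: specialize Theorem~\ref{thm:eff_surjectivity} to the two-class case, use the mass formula to see that the cuspidal discrepancy $\theta_{Q_s}-\theta_{\gen(Q_s)}$ is a single Hecke eigenform $g$, bound $|a_g(|D|)|$ by $m_s\, r(\gen(Q_s),|D|)$, and cancel the genus term coming from \eqref{eqn:eff_class} against \eqref{eqn:effc_bound}. The one point you rightly flag does need care: bounding $|r(Q_s,|D|)-r(Q_{s'},|D|)|$ by a literal triangle inequality (the sum) would only give the weaker constant $1+\tfrac{w_s}{w_{s'}}$, whereas using the nonnegativity of $r(Q_s,|D|)$ and $r(Q_{s'},|D|)$ as two one-sided constraints on the coefficient of $g$ (which is exactly how the paper argues) yields the constant $\max\bigl(1,\tfrac{w_s}{w_{s'}}\bigr)$, i.e.\ $m_s$ with no slack.
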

\begin{proof}
Let $X_0(N)^{\SSS}_{/\F_{\ell^2}} = \{s, s'\}$. Recall that  
$$
\theta_{\gen(Q_s)} = \frac{\frac{1}{w_s} \theta_{Q_s} + \frac{1}{w_{s'}} \theta_{Q_{s'}}}{1/w_s + 1/w_{s'}}. 
$$
By Siegel's theorem (see~\cite[Thm.2(ii)]{duke-schulze-pillot}) there is a Hecke 
eigenform $g$ such that $\theta_{Q_s}=\theta_{\gen(Q_s)} + g$ and $\ds \theta_{Q_{s'}} = \theta_{\gen(Q_s)} -\frac{w_{s'}}{w_s} g$.  Since $r(Q_s,|D|)\geq 0$ and $r(Q_{s'},|D|)\geq 0$, we have $\ds |a_g(|D|)|\leq \max (1,\frac{w_{s'}}{w_s}) r(\gen(Q_s),|D|)$.  The lemma then follows immediately by combining equations (\ref{eqn:eff_class}) and (\ref{eqn:effc_bound}) with $b_1g_1 =g$ after canceling $r(\gen(Q_s),|D|)$ on both sides.
\end{proof}

Let $G = S_{-D, 1}(g)$ be the $-D$th Shimura correspondence of $g$ as defined in 
Section~\ref{sec:backmodular}. Define  
$$
r_c:=\frac{\sum_{c'\mid c} \mu(c') \left(\frac{-D}{c'}\right) \frac{c}{c'}}{\left| \sum_{c'\mid c} \mu(c') \sum_{c''\mid \frac{c}{c'}}\mu(c'') \left(\frac{-D}{c''}\right) a_G\left(\frac{c}{c' c''}\right) \right|},
$$
where we take $r_c=\infty$ by convention if the denominator is zero, and 
$$
\widetilde{r}_c:=\frac{\varphi(c)}{ 2^{2v(c)} \sigma_0(c) c^{\frac{1}{2}}}.
$$
By equations (\ref{eqn:eff_exp}) and (\ref{eqn:effc_bound}) if $r_c>m_s$ or $\widetilde{r}_c>m_s$ then $s$ is in the image of $\red_{\ell}$.
Note that both $r_c$ and $\widetilde{r}_c$ are multiplicative and 
$r_c\geq \widetilde{r}_c$. For $c=p^m$ we have 
$$
\widetilde{r}_c = \frac{p^{\frac{m}{2}-1}(p-1)}{4(m+1)}.
$$
For $p\geq 5$, $\widetilde{r}_c$ is increasing as a function of $m$, whereas for $p<5$ it is increasing for $m>2$.  For a constant $a$ and $m=1$ the inequality $\widetilde{r}_c>a$ is satisfied for 
$$
p>P_a:=\left(\frac{4a + \sqrt{16a^2 +4}}{2}\right)^2.
$$
For $p\leq P_a$ we use the fact that $\widetilde{r}_c$ is increasing exponentially as a function of $m$ to obtain a bound $M_{p,a}$ such that $m>M_{p,a}$ implies that $\widetilde{r}_c>a$.  Therefore, there are only finitely many choices for the pair $(p,m)$ with $m\geq 1$ for which $r_{p^m}\leq a$.  Let 
$$
C_a = \{(p, m) : r_{p^m} \leq a\}. 
$$ 
Computing $r_{p^m}$ explicitly for $m\leq M_{p,a}$ allows us to explicitly calculate 
$C_a$.

We first follow the above argument with $a=1$ to show that
$$
r_{\min} :=\prod_{p} \min_{m\geq 0} r_{p^m}
$$
is well defined and satisfies $r_c\geq r_{\min}$ for every $c$.  We will now use the above bounds with $\ds a:=\frac{m_s}{r_{\min}}$.  Let $c$ be an arbitrary integer such that $r_c\leq m_s$.  Write $c=p^mc'$ with $(p,c')=1$.  By multiplicativity we have 
$$
m_s\geq r_c=r_{c'}r_{p^m}\geq r_{\min} r_{p^m}.
$$
Therefore $r_{p^m}\leq a$, so $(p,m)\in C_a$, and it follows that 
$$
c\mid \prod_{(p,m)\in C_a} p^m.
$$
We can refine this argument by recursively computing
$$
S_v:=\{ c: v(c) =v, r_c\leq m_s \}.
$$
For $c'\in S_v$, consider 
$$
a':=a \frac{\prod_{(p,c')=1} \min_{m\geq 0} r_{p^m}}{r_{c'}}
$$
Then for $c=p^mc'$ with $(p,c)=1$, $r_c\in S_{v+1}$ if and only if $(p,m)\in C_{a'}$.  Constructing the resulting tree in this manner allows us to terminate the depth-first search when $C_{a'}$ is empty.

Proceeding in this manner, we obtain for $\ell=11$ and $N=1$ exactly 116 possible values of $c$ in the union of all $S_v$, the largest of which is 5124.  For $\ell=17$ and $N=1$ there are 93 possible values of $c$, the largest of which is 3990, and for $\ell=19$ and $N=1$ there are $165$ possible values of $c$, the largest of which is 8502.

\begin{acknowledgements}
We are grateful to Christophe Cornut for suggesting the problem and for the numerous discussions. We thank Barry Mazur, Philippe Michel, Steve Miller, Ken Ribet, William 
Stein and Tonghai Yang for helpful conversations. The first author thanks IHES, France 
for their kind hospitality and for providing a post-doctoral position during which a significant part of the research was completed. Part of the paper was written while the second author was in residence at IHES in France. He thanks the institute for providing a stimulating research environment.   
\end{acknowledgements}

\bibliographystyle{amsalpha}
\bibliography{biblio}

\end{document}